\documentclass[12pt]{amsart}
\usepackage{amsmath,latexsym,amsfonts,amssymb,amsthm}
\usepackage{geometry}
\geometry{a4paper,top=3.5cm,bottom=3.8cm,left=2.5cm,right=2.5cm}
\usepackage{hyperref}
\usepackage{mathrsfs}
\usepackage{graphicx,color}
\usepackage{tikz-cd}
\usepackage{tikz}
\usetikzlibrary{positioning}
\usepackage{mathtools}
\usepackage[all,cmtip]{xy}
\usepackage[alphabetic]{amsrefs}

\setcounter{tocdepth}{1}

\newtheorem{prop}{Proposition}[section]
\newtheorem{thm}[prop]{Theorem}
\newtheorem{cor}[prop]{Corollary}

\newtheorem{lem}[prop]{Lemma}

\theoremstyle{definition}

\newtheorem{defn}[prop]{Definition}

\newtheorem{rem}[prop]{\it Remark}

\newtheorem*{claim*}{Claim}

\newcommand{\bP}{\mathbb{P}}
\newcommand{\bC}{\mathbb{C}}
\newcommand{\bR}{\mathbb{R}}

\newcommand{\bQ}{\mathbb{Q}}
\newcommand{\bZ}{\mathbb{Z}}

\newcommand{\cO}{\mathcal{O}}

\newcommand{\cM}{\mathcal{M}}

\newcommand{\cJ}{\mathcal{J}}

\newcommand{\fb}{\mathfrak{b}}

\newcommand{\Supp}{\mathrm{Supp}~}

\newcommand{\mult}{\mathrm{mult}}
\newcommand{\lct}{\mathrm{lct}}

\newcommand{\Pic}{\mathrm{Pic}}
\newcommand{\vol}{\mathrm{vol}}
\newcommand{\ord}{\mathrm{ord}}

\newcommand{\lnlc}{\ell_{\mathrm{nlc}}}
\newcommand{\lnklt}{\ell_{\mathrm{nklt}}}

\begin{document}

\title[Birational superrigidity and K-stability]{Birational superrigidity and K-stability of Fano complete intersections of index one}
\author{Ziquan Zhuang}
\address{Z. Zhuang: Department of Mathematics, Princeton University, Princeton, NJ, 08544-1000.}
\email{zzhuang@math.princeton.edu}
\address{C. Stibitz: Department of Mathematics, Princeton University, Princeton, NJ, 08544-1000.}
\email{cstibitz@math.princeton.edu}
\date{}

\maketitle

\begin{abstract}
    We prove that every smooth Fano complete intersection of index $1$ and codimension $r$ in $\mathbb{P}^{n+r}$ is birationally superrigid and K-stable if $n\ge 10r$. We also propose a generalization of Tian's criterion of K-stability and, as an application, prove the K-stability of the complete intersection of a quadric and a cubic in $\mathbb{P}^5$. In the appendix (written jointly with C. Stibitz), we prove the conditional birational superrigidity of Fano complete intersections of higher index in large dimension.
\end{abstract}

\section{Introduction}

In this paper, we study two different notions on Fano varieties: the \emph{birational superrigidity}, which goes back to the work of \cite{IM-quartic} on quartic threefolds and has been extensively studied in the rationality problem of Fano varieties (see e.g. \cite{P-double-space,dFEM-bounds-on-lct,C-triple-space,dF-hypersurface}); and the \emph{K-stability}, which is closely related to the existence of K\"ahler-Einstein (KE) metric by the celebrated proof from \cite{cds,Tian} of the Yau-Tian-Donaldson conjecture. Despite their theoretical interest, both properties are not so easy to verify in general. Indeed, it is a folklore conjecture that every smooth Fano complete intersection $X\subseteq\bP^n$ is K-polystable, whereas those of index one (i.e. $-K_X$ is linearly equivalent to the hyperplane class) and large dimension are birationally superrigid, and only some partial progress has been made in this direction. For birational superrigidity, the hypersurface case was settled by the work of \cite{IM-quartic,dFEM-bounds-on-lct,dF-hypersurface}; in the case of higher codimensions,  \cite{P-superrigid-cpi-1,P-superrigid-cpi-3,P-superrigid-cpi-2} prove that a \emph{general} member of complete intersections of given degree and codimension is birationally superrigid provided they have large dimension, while \cite{Suzuki-cpi} shows the birational superrigidity of certain families of complete intersections, albeit under some assumptions on the degrees of their defining equations. As for K-stability, the intersection of two (hyper)quadrics is treated in \cite{AGP-two-quadric}, the case of cubic threefolds has been settled recently by \cite{LX-cubic-3fold} and in most remaining cases, we only have the following criterion:

\begin{thm}[\cite{Tian-alpha,OS-alpha,Fujita-alpha}] \label{thm:Tian's criterion}
Let $X$ be a $\bQ$-Fano variety of dimension $n$. Assume that $(X,\frac{n}{n+1}D)$ is log canonical $($resp. klt; or log canonical if $X$ is smooth and $n\ge 2)$ for every effective divisor $D\sim_\bQ -K_X$. Then $X$ is K-semistable $($resp. K-stable$)$.
\end{thm}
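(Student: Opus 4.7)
My plan is to apply the valuative criterion for K-stability, under which $X$ is K-semistable (resp.\ K-stable) if and only if
$$\beta_X(E) := A_X(E) - S_X(E) \geq 0 \quad (\text{resp. } > 0)$$
for every prime divisor $E$ over $X$, where $A_X(E)$ is the log discrepancy and
$$S_X(E) := \frac{1}{(-K_X)^n} \int_0^\infty \vol(-\pi^* K_X - tE)\, dt$$
is computed on any birational model $\pi\colon Y\to X$ extracting $E$.

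The first step is to translate the hypothesis into a lower bound on $A_X(E)$. By Demailly's divisorial reformulation of the log canonical threshold,
$$\alpha(X) := \inf_{D \sim_\bQ -K_X,\, D \geq 0} \lct(X,D) = \inf_E \frac{A_X(E)}{T_X(E)},$$
where $T_X(E) := \sup\{t \ge 0 : -\pi^*K_X - tE \text{ is pseudo-effective}\}$. The lc (resp.\ klt) hypothesis thus yields $A_X(E) \ge \tfrac{n}{n+1} T_X(E)$ (resp.\ strictly) for every such $E$. The heart of the proof is then the universal estimate
$$S_X(E) \leq \frac{n}{n+1}\, T_X(E),$$
since combined with the translated hypothesis this gives $A_X(E) \geq S_X(E)$ (resp.\ strictly) and the valuative criterion delivers the conclusion.

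To prove the $S \leq \frac{n}{n+1} T$ estimate I would work with an Okounkov body $\Delta \subset \bR^n_{\geq 0}$ of $-K_X$ with respect to an admissible flag on $Y$ whose first element is $E$. Under this identification, $\vol(-\pi^*K_X - tE) = n!\cdot\vol(\Delta \cap \{x_1\ge t\})$, so $T_X(E) = \max_{x\in\Delta} x_1$ and $S_X(E)$ equals the first coordinate of the centroid of $\Delta$. The desired inequality then reduces to the classical convex-geometric fact that, for any convex body $\Delta\subset\bR^n$ with $\min_\Delta x_1 = 0$, the centroid satisfies $\bar{x}_1 \le \tfrac{n}{n+1}\max_\Delta x_1$, sharp only for the ``upside-down simplex'' with apex on $\{x_1 = 0\}$; this in turn is a consequence of the Brunn--Minkowski concavity of the slice-volume function. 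The strict inequality needed for K-stability follows either directly from the strict bound on $A_X(E)$ under the klt hypothesis, or (in the smooth case under the lc hypothesis alone) from a finer analysis of when the centroid inequality saturates, which is ruled out at a non-trivial divisor over a smooth variety.

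The main obstacle is the inequality $S_X(E) \leq \tfrac{n}{n+1} T_X(E)$ itself: although its convex-geometric content is elementary, implementing it in the algebro-geometric setting requires the full Okounkov-body formalism of Lazarsfeld--Mustata and Kaveh--Khovanskii, together with its compatibility with volumes of big divisors; alternatively one can argue via positive intersection products and asymptotic multiplier ideals, but either way this forms the technical core of the proof.
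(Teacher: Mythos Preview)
The paper does not prove this statement: Theorem~\ref{thm:Tian's criterion} is quoted from the cited references without proof. Your argument is correct and is essentially Fujita's proof; the inequality $S_X(F)\le\frac{n}{n+1}\tau(F)$ that you derive via Okounkov bodies is exactly \cite[Proposition~2.1]{Fujita-plt-blowup}, as the paper itself notes in the remark following Lemma~\ref{lem:beta-inequality}.

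For comparison, the paper does prove the stronger Theorem~\ref{thm:criterion}, and the method there is a close relative of yours. The valuative criterion (Theorem~\ref{thm:beta-criterion}) is again the backbone, but the key estimate is sharpened to $S_X(F)\le\frac{1}{n+1}\tau(F)+\frac{n-1}{n+1}\eta(F)$ (Lemma~\ref{lem:beta-inequality}), proved using log-concavity of the restricted volumes $\vol_{Y|F}(\pi^*L-xF)$ from \cite{ELMNP-restricted-volume} rather than Okounkov bodies. The underlying convexity is the same Brunn--Minkowski phenomenon you invoke; what the paper's refinement buys is the separation of the pseudo-effective threshold $\tau$ from the movable threshold $\eta$, which is precisely what allows the hypothesis of Theorem~\ref{thm:criterion} to be weaker than Tian's. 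Note also that the paper's Lemma~\ref{lem:beta-inequality} requires $\rho(X)=1$, so Theorem~\ref{thm:criterion} only recovers Tian's criterion under that extra assumption; your direct argument does not need it.
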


This has been successfully applied to smooth hypersurfaces of index $1$ \cite{CP-hypersurface-lct,Fujita-alpha}, to \emph{general} members of some given type of complete intersections of index $1$ \cite{P-KE-cpi,EP-lct-cpi,P-lct-cpi-2} and to certain Fano 3-folds \cite{Noether-Fano,C-3fold-lct,alpha-of-rigid-3fold}. However, the singularities of the pairs as in Theorem \ref{thm:Tian's criterion} can still be hard to control at times, especially for \emph{special} members of a given family. 

The purpose of the present work is therefore twofolds: to introduce another way of proving K-(semi)stability that seems to work well for a large class of Fano varieties without further generality conditions in the corresponding moduli, and to provide a method of estimating log canonical threshold that finds its use in the study of both birational superrigidity and K-stability. As a major application, we prove the following two results.

\begin{thm} \label{thm:superrigidity}
Let $X\subseteq \bP^{n+r}$ be a smooth Fano complete intersection of index $1$, codimension $r$ and dimension $n\ge 10r$. Then $X$ is birationally superrigid.
\end{thm}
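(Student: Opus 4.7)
The plan is to apply the Noether--Fano method. Suppose for contradiction that $X$ is not birationally superrigid; then there exists a movable linear system $\cM\subseteq |-mK_X|$ for some positive integer $m$ such that the pair $(X,\frac{1}{m}\cM)$ is not canonical. Equivalently, some prime divisor $E$ over $X$ satisfies $\ord_E(\cM)>m\cdot a(E;X)$, where $a(E;X)$ is the discrepancy. My first step would be to rule out the possibility that the center $Z:=c_X(E)$ has positive dimension. This is the classical part: combining degree bounds for positive-dimensional non-canonical centers with the index-one relation $\sum_i d_i=n+r$ forces $Z$ to be a point, and in the complete intersection setting this should run analogously to \cite{P-superrigid-cpi-1,P-superrigid-cpi-2} provided $n\ge 10r$.

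It then remains to exclude a maximal singular center at a closed point $p\in X$. The main tool is Pukhlikov's hypertangent divisors: since $X$ is cut out by $r$ hypersurfaces of degrees $d_1\le\cdots\le d_r$ with $\sum_i d_i=n+r$, at any smooth point $p$ one constructs, for each $i$ and each $k=2,\ldots,d_i$, a hypertangent linear system $T_k$ on $X$ of degree $k$ whose generic member has multiplicity at least $k+1$ at $p$, yielding $n$ such systems in total. The plan is to select $n-2$ of them with optimized degrees, intersect with two general members $D_1,D_2\in\cM$, and read off an upper bound for $\mult_p(D_1\cdot D_2\cdot T_{k_1}\cdots T_{k_{n-2}})$ in terms of $m$.

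To convert this into a contradiction, the upper bound has to be combined with a lower bound coming from the non-canonical singularity at $p$. The classical lower bound is Corti's inequality $\mult_p(D_1\cdot D_2)>4m^2$, but since the excerpt advertises a \emph{new method of estimating log canonical thresholds}, my plan is to strengthen this by proving the following: if $(X,\frac{1}{m}\cM)$ is not canonical at $p$, then after restriction to a suitable complete intersection $Y$ through $p$ of generic hypertangent divisors, the restricted pair $(Y,\frac{1}{m}\cM|_Y)$ is still not log canonical at $p$, via inversion of adjunction. On the lower-dimensional $Y$, direct dFEM-type lct bounds apply, and comparing these with the upper bound above yields a numerical inequality that is violated precisely when $n\ge 10r$.

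The hardest step, where I expect most of the real technical work to lie, is setting up the restriction and the inversion-of-adjunction argument: one needs to choose a chain of hypertangent sections whose common zero locus $Y\subseteq X$ through $p$ is klt (or at least sufficiently mildly singular) so that inversion of adjunction applies, and that simultaneously preserves the non-canonical singularity of $(X,\frac{1}{m}\cM)$ upon restriction. This requires delicate control over the singularities of general complete intersections of hypertangent divisors at $p$, together with a careful combinatorial bookkeeping of the degrees $k_i$, which is presumably what drives the specific linear bound $n\ge 10r$ rather than a smaller constant.
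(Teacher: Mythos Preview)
Your plan follows the traditional Pukhlikov hypertangent-divisor method, and the step you yourself flag as hardest is a genuine obstruction rather than a technicality. Hypertangent divisors are, by construction, singular at $p$ (their multiplicity there exceeds their degree), so an intersection $Y$ of several of them will not be smooth or even klt at $p$; neither inversion of adjunction nor the dFEM multiplicity--lct bound applies on such a $Y$. The known way around this is to impose regularity conditions on the defining equations of $X$ so that the hypertangent systems behave well, and those conditions hold only for \emph{general} complete intersections --- this is precisely why \cite{P-superrigid-cpi-1,P-superrigid-cpi-2,P-superrigid-cpi-3} prove superrigidity only for general members. Your outline, as written, would reprove that known case but would not reach every smooth $X$.

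The paper's proof is structurally different and never uses hypertangent divisors. One first invokes \cite[Proposition~2.1]{Suzuki-cpi} to get $\mult_S(M^2)\le 1$ for every subvariety $S$ of dimension $\ge 2r$; combined with \cite{dFEM-mult-and-lct} this forces $(X,2M)$ to be lc outside a set $Z$ of dimension $\le 2r-1$. One then cuts by a \emph{general linear} subspace of codimension $2r-1$ through the point $x$, obtaining a smooth complete intersection $Y$ on which $(Y,2M|_Y)$ has only isolated non-lc points. The new ingredient, absent from your plan, is Theorem~\ref{thm:lct-estimate} via Corollary~\ref{cor:lct>1/2}: Nadel vanishing bounds the colength of the multiplier ideal $\cJ(Y,(2-\epsilon)M|_Y)$ by $h^0(Y,L)$ with $L\sim K_Y+2M|_Y$, and once $h^0(Y,L)\le\binom{n+r+1}{2r}<\frac{(n-2r+1)^{n-2r+1}}{(n-2r+1)!}$ one concludes $\lct(Y;2M|_Y)>\tfrac12$, i.e.\ $(Y,M|_Y)$ is klt. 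Since Suzuki's bound also gives that $(X,M)$ is canonical outside a set of dimension $\le r-1<\mathrm{codim}_X Y$, Lemma~\ref{lem:non-canonical to non-lc} (inversion of adjunction along general hyperplanes) shows a non-canonical point of $(X,M)$ would become a non-lc point of $(Y,M|_Y)$, a contradiction. The bound $n\ge 10r$ comes solely from the displayed binomial inequality, not from any hypertangent bookkeeping.
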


\begin{thm} \label{thm:K-stability of cpi}
The following Fano manifolds are K-stable, hence admit KE metric:
    \begin{enumerate}
        \item the complete intersection $X_{2,3}\subseteq\bP^5$ of a quadric and a cubic;
        \item every complete intersection $X\subseteq \bP^{n+r}$ of index $1$, codimension $r$ and dimension $n\ge10r$.
    \end{enumerate}
\end{thm}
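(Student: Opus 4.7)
Both parts are proved by applying the generalization of Theorem \ref{thm:Tian's criterion} promised in the abstract. The rough idea is that Theorem \ref{thm:Tian's criterion} asks $(X,\frac{n}{n+1}D)$ to be log canonical for every effective $D\sim_\bQ -K_X$, a condition that typically fails only along a small (often $0$-dimensional) locus on the Fano varieties at hand; the generalization replaces this global requirement by a more local condition that can be verified once the potential non-lc centers have been classified and controlled.

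For part (2), I would exploit the fact that the hypothesis $n\ge 10r$ is exactly that of Theorem \ref{thm:superrigidity}, so the multiplicity estimates used to prove birational superrigidity are available. In the superrigidity proof one bounds the singularities of movable linear systems by intersecting with the covering family of lines on $X\subseteq\bP^{n+r}$ and invoking the de Fernex--Ein--Mustaţă machinery. The very same bounds control $\mult_Z D$ for any subvariety $Z\subseteq X$ that could be a non-lc center of $(X,\frac{n}{n+1}D)$, and force such a center to be either empty or $0$-dimensional. Feeding this structural information into the generalized Tian's criterion then yields K-stability.

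Part (1) requires a hands-on treatment because the alpha invariant of $X_{2,3}\subseteq\bP^5$ is too small for Theorem \ref{thm:Tian's criterion} to apply directly. For an effective $\bQ$-divisor $D\sim_\bQ -K_X=\cO_X(1)$, suppose $(X,\frac{3}{4}D)$ fails to be log canonical at some point $p\in X$. Intersecting $D$ with lines and conics through $p$, and applying inversion of adjunction, one pins the minimal non-klt center to $p$ itself and bounds $\mult_p D$ rather sharply. The explicit geometry of $X_{2,3}$ at $p$ (its tangent cone, the lines and conics passing through $p$, and the residual intersection with plane sections) should then produce, in each case, a compensating valuation or auxiliary anticanonical divisor witnessing the hypothesis of the generalized criterion.

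The main obstacle I expect is the analysis at the isolated non-log-canonical points. For part (2) this is largely uniform and driven by the line intersection method, but for part (1) one cannot avoid a case-by-case geometric study of points where $X_{2,3}$ carries unusually many low degree curves (for instance points lying on several lines of $X_{2,3}$), and verifying the generalized criterion at each such point is the step most likely to demand delicate explicit computation rather than a general argument.
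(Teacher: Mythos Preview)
Your proposal misidentifies the nature of the ``generalized Tian's criterion'' (Theorem~\ref{thm:criterion}), and this leads both parts astray. The criterion is not a local relaxation of Tian's condition on $(X,\frac{n}{n+1}D)$; it is a genuinely different global condition: one must show that $(X,\frac{1}{n+1}D+\frac{n-1}{n+1}M)$ is klt for every effective $D\sim_\bQ -K_X$ \emph{and} every movable boundary $M\sim_\bQ -K_X$. The movable boundary $M$ is the new ingredient, and it never appears in your outline.

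For part (2) the paper does not analyse non-lc centres of $(X,\frac{n}{n+1}D)$. Instead it combines Theorem~\ref{thm:superrigidity} (so $(X,M)$ is canonical for every movable $M\sim_\bQ -K_X$) with the estimate $\lct(X;D)>\frac{1}{2}$ for every $D\sim_\bQ -K_X$ (Lemma~\ref{lem:lct-cpi}), and then invokes Theorem~\ref{thm:superrigidity implies K-stability}. That lct bound is not obtained via line intersections; it comes from cutting by general linear sections until the non-klt locus is isolated and then applying the multiplier-ideal/Nadel-vanishing argument of Theorem~\ref{thm:lct-estimate} together with Corollary~\ref{cor:lct>1/2}. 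Your plan to ``force the non-lc centre of $(X,\frac{n}{n+1}D)$ to be $0$-dimensional and feed this into the generalized criterion'' has no landing point: nothing in Theorem~\ref{thm:criterion} accepts that kind of input.

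For part (1) your plan to study where $(X,\frac{3}{4}D)$ fails to be lc is essentially an attempt to rescue Tian's original criterion, and again there is no mechanism in Theorem~\ref{thm:criterion} that takes such information as input. The actual proof (Proposition~\ref{prop:X_2,3}) works directly with $\Delta=\frac{1}{4}D+\frac{1}{2}M$: after a case split on the integer $r$ with $D_0\in|-rK_X|$ one shows, using Theorem~\ref{thm:lct-estimate} on hyperplane sections together with the sharp lattice-point bounds of Lemmas~\ref{lem:sigma_2,m}--\ref{lem:sigma-bar_2,m}, that $(X,\frac{4}{3}\Delta)$ is lc in dimension $1$; then Theorem~\ref{thm:lct-estimate} with $L=0$ gives $\lct(X;\frac{4}{3}\Delta)\ge\frac{3}{4}$, with strict inequality forced by an explicit multiplicity count. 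The movable part $M$ (and the bound $\mult_C(M^2)\le 6$ coming from it) is essential throughout; without it your case analysis at special points of $X_{2,3}$ has no way to conclude.
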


Indeed, one can usually further weaken the assumption on the dimension $n$ for each fixed codimension $r$. For example, when $r=2$, we find $n\ge 12$ is enough. 

The fact that the same varieties are involved in both statements is not a mere coincidence and it is actually conjectured \cite{oo-slope-stability,alpha-of-rigid-3fold} that birationally rigid Fano varieties are always K-stable. Although this conjecture is still open, a weaker statement is known.

\begin{thm}[\cite{rigid-imply-stable}] \label{thm:superrigidity implies K-stability}
Let $X$ be a $\bQ$-Fano variety of Picard number $1$. If $X$ is birationally superrigid and $\lct(X;D)\ge \frac{1}{2}$ $($resp. $>\frac{1}{2})$ for every effective divisor $D\sim_\bQ -K_X$, then $X$ is K-semistable $($resp. K-stable$)$.
\end{thm}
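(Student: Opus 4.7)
My plan is to invoke the valuative criterion of K-stability (Fujita--Li): $X$ is K-semistable (resp.\ K-stable) if and only if $\beta(E) := A_X(E) - S(E) \ge 0$ (resp.\ $> 0$) for every prime divisor $E$ over $X$, where $S(E) = \frac{1}{(-K_X)^n}\int_0^{\infty}\vol(-K_X - tE)\,dt$. Suppose for contradiction that some divisor $E$ over $X$ violates this, so $S(E) > A_X(E)$. Using the Fujita--Odaka theory of basis-type divisors, for each sufficiently divisible $m$ I would choose an $E$-adic basis of $H^0(X, -mK_X)$ and form the associated basis-type $\bQ$-divisor $D_m \sim_\bQ -K_X$; then $\ord_E(D_m) = S_m(E)$ converges to $S(E)$, so $\ord_E(D_m) > A_X(E)$ for $m \gg 0$.

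The hypothesis $\lct(X, D_m) \ge 1/2$ immediately yields $\ord_E(D_m) \le 2 A_X(E)$, which falls short of the desired bound $\ord_E(D_m) \le A_X(E)$ by exactly the factor of two that birational superrigidity should bridge. I would invoke the Noether--Fano--Iskovskikh criterion: since $X$ has Picard number one, superrigidity is equivalent to the statement that for every mobile linear system $\cM \sim_\bQ \mu(-K_X)$, the pair $(X, \tfrac{1}{\mu}\cM)$ has canonical singularities, so $\ord_E(\cM) \le \mu A_X(E)$ for every exceptional prime $E$. I would apply this to the filtered pieces
\[
\cF^\lambda R_m := \{s \in H^0(X, -mK_X) : \ord_E(s) \ge \lambda\},
\]
decomposing each $|\cF^\lambda R_m|$ as a mobile linear subsystem plus a fixed divisor: the mobile part is controlled by superrigidity, the fixed part by the lct hypothesis. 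Using the filtration identity $S_m(E) = \tfrac{1}{m N_m}\sum_{\lambda \ge 1}\dim \cF^\lambda R_m$, a summation by parts should then yield $S_m(E) \le A_X(E) + o(1)$ as $m \to \infty$, contradicting $S(E) > A_X(E)$.

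The main obstacle lies in the last step. The mobile part of $|\cF^\lambda R_m|$ has class $-mK_X - F_m^\lambda$ for some fixed effective divisor $F_m^\lambda$ that \emph{grows} with $\lambda$ (as sections are forced to vanish more highly along $E$, they pick up more fixed components), and the superrigidity estimate reads $\ord_E \le (m - \epsilon_m^\lambda)A_X(E)$ with $\epsilon_m^\lambda$ measuring the size of $F_m^\lambda$ against a line. One must delicately track how these errors accumulate across the filtration and show that the total fixed-part contribution to $S_m(E)$ is absorbed by the slack in the inequality $\lct(X, D_m) \ge 1/2$, thereby improving the naive lct bound by exactly the missing factor of two. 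A reformulation through Okounkov bodies, or directly manipulating the volume-function integral via Fujita's inequality $S(E) \le \tfrac{n}{n+1}\tau(E)$ combined with a superrigidity-improved estimate on $\tau(E)$, may give a cleaner route than the discrete filtration bookkeeping.
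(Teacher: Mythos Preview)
Your proposal circles the right target but does not land. You correctly identify the valuative criterion and the need to bridge the factor of two between $\tau(E)\le 2A_X(E)$ (from the lct hypothesis) and the desired $S(E)\le A_X(E)$, and you sense that the mobile/fixed decomposition of the filtered linear series is where superrigidity should enter. But you stop at the acknowledged ``obstacle,'' and your alternative suggestion at the end---improving $\tau(E)$ via superrigidity---goes in the wrong direction: superrigidity says nothing about $\tau(E)$, since a single irreducible (non-movable) divisor $D\sim_\bQ -K_X$ can have $\ord_E(D)$ as large as $\tau(E)$.

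What the paper actually does (recovering the cited theorem as a corollary of its Theorem~\ref{thm:criterion}) is to formalize your mobile/fixed intuition as a new invariant, the \emph{movable threshold}
\[
\eta(F)=\sup\{\eta>0 : \text{every divisor in the stable base locus of }\pi^*(-K_X)-\eta F\text{ is exceptional over }X\},
\]
and then to prove, via log-concavity of restricted volumes and a careful case split on whether $S(F)\ge\eta(F)$, the refined Fujita-type inequality (Lemma~\ref{lem:beta-inequality})
\[
S(F)\;\le\;\tfrac{1}{n+1}\,\tau(F)+\tfrac{n-1}{n+1}\,\eta(F).
\]
This is the missing lemma that your filtration bookkeeping was groping toward. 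Once it is in hand the proof is one line: superrigidity says $(X,M)$ is canonical for the movable boundary $M=\tfrac{1}{m}|{-}mK_X-m\eta F|$, hence $\eta(F)=\ord_F(M)\le A_X(F)$; the lct hypothesis gives $\tau(F)\le 2A_X(F)$; and therefore
\[
S(F)\le \tfrac{1}{n+1}\cdot 2A_X(F)+\tfrac{n-1}{n+1}\cdot A_X(F)=A_X(F),
\]
with strict inequality in the klt case. Equivalently, $\tfrac{1}{n+1}D+\tfrac{n-1}{n+1}M$ is a convex combination of $\tfrac12 D$ and $M$, so it is lc (resp.\ klt), and Theorem~\ref{thm:criterion} applies. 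The point you were missing is not the discrete summation-by-parts over the filtration, but rather isolating $\eta(F)$ as the quantity controlled by superrigidity and proving that it carries weight $\tfrac{n-1}{n+1}$ (not merely $\tfrac12$) in the volume integral.
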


This already provides the passage from birational superrigidity to K-stability in many cases, although it does not apply directly to the complete intersection of a quadric and a cubic in $\bP^5$. Indeed, these Fano threefolds are never birationally superrigid and only the general ones are known to be birationally rigid \cite{IP-quadric-cubic}. To verify their K-stability, we interpolate Theorem \ref{thm:superrigidity implies K-stability} with Tian's criterion (Theorem \ref{thm:Tian's criterion}) and further propose the following criterion of K-stability that relates it to some ``weighted" version of birational superrigidity (here a movable boundary is defined as an expression of the form $a\cM$ where $a\in\bQ$ and $\cM$ is a movable linear system; we refer to Section \ref{sec:prelim-rigidity} for more details.)

\begin{thm} \label{thm:criterion}
Let $X$ be a $\bQ$-Fano variety of Picard number $1$ and dimension $n$. Assume that for every effective divisor $D\sim_\bQ -K_X$ and every movable boundary $M\sim_\bQ -K_X$, the pair $(X,\frac{1}{n+1}D+\frac{n-1}{n+1}M)$ is log canonical $($resp. klt$)$. Then $X$ is K-semistable $($resp. K-stable$)$.
\end{thm}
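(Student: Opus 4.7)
The plan is to apply the Fujita--Li valuative criterion: $X$ is K-semistable (resp. K-stable) if and only if $\beta(E) := A_X(E) - S(E) \geq 0$ (resp. $>0$) for every prime divisor $E$ over $X$, where
\[
S(E) = \frac{1}{(-K_X)^n}\int_0^{\tau(E)}\mathrm{vol}(-K_X - tE)\,dt
\]
and $\tau(E) = \sup\{t \geq 0 : -K_X - tE\text{ is pseudo-effective}\}$. Fix such an $E$, and introduce the \emph{movable threshold} $\tau_{\mathrm{mov}}(E) := \sup_M \mathrm{ord}_E(M)$, the supremum over all movable boundaries $M \sim_\bQ -K_X$. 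Choosing effective $D \sim_\bQ -K_X$ realizing $\tau(E)$, and a sequence of movable $M$ with $\mathrm{ord}_E(M) \to \tau_{\mathrm{mov}}(E)$, the log canonicity hypothesis applied at $E$ yields
\[
A_X(E) \;\geq\; \tfrac{1}{n+1}\tau(E) + \tfrac{n-1}{n+1}\tau_{\mathrm{mov}}(E),
\]
with strict inequality in the klt case by a compactness/continuity argument.

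The task thus reduces to proving the volume estimate
\[
S(E) \;\leq\; \tfrac{1}{n+1}\tau(E) + \tfrac{n-1}{n+1}\tau_{\mathrm{mov}}(E).
\]
To approach this, I would pass to a log resolution $\pi\colon Y \to X$ on which $E$ is a prime divisor, and analyze the volume function $v(t) := \mathrm{vol}(\pi^*(-K_X) - tE)$ via its divisorial Zariski decomposition $\pi^*(-K_X) - tE = P(t) + N(t)$. The crucial structural feature is that $N(t) \equiv 0$ exactly on $[0, \tau_{\mathrm{mov}}(E)]$: a movable boundary $M$ on $X$ with $\mathrm{ord}_E(M) = t$ lifts to a movable representative of the class on $Y$, and conversely (using $\rho(X) = 1$) a movable class on $Y$ pushes down to such a boundary on $X$ with matching order at $E$. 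Consequently, on $[0, \tau_{\mathrm{mov}}]$ the volume is cut out by the intersection-theoretic data of $\pi^*(-K_X) - tE$, while on $[\tau_{\mathrm{mov}}, \tau]$ the accumulating negative part forces strictly faster decay.

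The principal obstacle is converting this structural input into the target numerical estimate. A direct split of $\int_0^\tau v\,dt$ at $\tau_{\mathrm{mov}}$, combined with the trivial head bound $v \leq (-K_X)^n$ and Fujita's centroid inequality $\int_0^\sigma \mathrm{vol}(L' - sE)\,ds \leq \tfrac{n}{n+1}\sigma\cdot\mathrm{vol}(L')$ applied to the tail divisor $L' = \pi^*(-K_X) - \tau_{\mathrm{mov}}E$, yields only $S(E) \leq \frac{n\tau + \tau_{\mathrm{mov}}}{n+1}$, which is even weaker than Fujita's global bound $S(E) \leq \frac{n\tau}{n+1}$ and insufficient in the regime $\tau_{\mathrm{mov}} < \tau$ where the hypothesis is most informative. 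The required improvement of $\frac{n-1}{n+1}(\tau - \tau_{\mathrm{mov}})$ over Fujita must come from the accelerated decay of $v(t) = \langle P(t)^n\rangle$ on $[\tau_{\mathrm{mov}}, \tau]$, where the strict inequality $\langle P(t)^n\rangle < (\pi^*(-K_X) - tE)^n$ encodes the effect of the growing $N(t)$. I expect to make this precise by interpreting $\tau_{\mathrm{mov}}$ convex-geometrically inside the Newton--Okounkov body of $-K_X$ with respect to a flag refining $E$---the slab $\{x_1 \leq \tau_{\mathrm{mov}}\}$ having full-dimensional horizontal slices because those slices encode the movable subsystems---and applying a Gr\"unbaum-type centroid inequality to the resulting ``movable slab plus thin spike'' decomposition. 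This refined volume estimate is the substantive new step; the remainder of the argument merely combines the valuative criterion with standard positivity machinery.
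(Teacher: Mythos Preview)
Your overall strategy matches the paper's exactly: reduce to the valuative criterion, introduce the movable threshold $\eta(E)=\tau_{\mathrm{mov}}(E)$, extract $A_X(E)\ge \frac{1}{n+1}\tau(E)+\frac{n-1}{n+1}\eta(E)$ from the hypothesis, and then prove the volume inequality
\[
S(E)\;\le\;\tfrac{1}{n+1}\tau(E)+\tfrac{n-1}{n+1}\eta(E).
\]
The gap is entirely in this last step, which you yourself flag as ``the substantive new step'' and leave at the level of an expectation about Okounkov bodies and a Gr\"unbaum-type inequality. As written, nothing is proved there; and the structural claim you rely on (that the negative part $N(t)$ of the Zariski decomposition on $Y$ vanishes precisely on $[0,\eta]$) is not correct in general, since $\pi$-exceptional divisors can appear in $N(t)$ already for $t<\eta$.

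The paper's argument for this volume inequality is both more concrete and uses a sharper consequence of $\rho(X)=1$ than anything in your sketch. Working with the \emph{restricted} volume $V_t=\vol_{Y|E}(\pi^*(-K_X)-tE)$, one has the centroid identity $\int_0^\tau (t-S(E))V_t\,\mathrm{d}t=0$ and log-concavity of $t\mapsto V_t$. The key point you are missing is that $\rho(X)=1$ forces any effective $D\sim_\bQ -K_X$ with $\ord_E(D)>\eta$ to be a multiple of a \emph{unique} irreducible divisor, and that divisor must satisfy $\ord_E(D)=\tau$. This yields the \emph{exact} formula
\[
V_t=\Bigl(\tfrac{\tau-t}{\tau-\eta}\Bigr)^{n-1}V_\eta\qquad(\eta\le t\le\tau),
\]
not merely an inequality. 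Combining this exact tail with the log-concave bound $(t-t_0)V_t\le (t-t_0)(t/t_0)^{n-1}V_{t_0}$ on $[0,\eta]$ (with $t_0=\eta$ or $t_0=S(E)$ according to whether $S(E)\ge\eta$ or $S(E)<\eta$), the centroid identity reduces by direct calculus to $S(E)\le \frac{1}{n+1}\tau+\frac{n-1}{n+1}\eta$. No Okounkov body or Gr\"unbaum step is needed.

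A secondary issue: your ``compactness/continuity argument'' for strictness in the klt case is not sound, since a limit of strict inequalities is only weak. The paper handles this by restricting to \emph{dreamy} $E$ (which suffices for the valuative criterion), for which both $\tau(E)$ and $\eta(E)$ are actually attained by some $D$ and some movable $M$; then the klt hypothesis applied to that specific pair gives the strict inequality directly.
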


Since $\frac{1}{n+1}D+\frac{n-1}{n+1}M \sim_\bQ -\frac{n}{n+1}K_X$, the assumption above is automatically implied by those of Theorem \ref{thm:Tian's criterion}. However, our assumption seems easier to satisfy as movable boundaries on a Fano variety usually have mild singularities and the most singular divisor $D$ only gets the weight $\frac{1}{n+1}$ (as opposed to $\frac{n}{n+1}$ in Theorem \ref{thm:Tian's criterion}) in our criterion. In particular, if $X$ is birationally superrigid and hence $(X,M)$ has canonical singularities for every movable boundary $M\sim_\bQ -K_X$, then as $\frac{1}{n+1}D+\frac{n-1}{n+1}M$ is a convex combination of $\frac{1}{2}D$ and $M$, we recover Theorem \ref{thm:superrigidity implies K-stability} as a corollary.

By granting Theorem \ref{thm:superrigidity} and \ref{thm:superrigidity implies K-stability}, the second part of Theorem \ref{thm:K-stability of cpi} is reduced to an estimate of log canonical thresholds on the varieties in question. An amusing fact is that in our case, this latter problem turns out to be almost identical to proving birational superrigidity itself, so in some sense we get both birational superrigidity and K-stability for free once we know how to provide the required lower bound of log canonical thresholds. A key ingredient for such estimate is given by the following.

\begin{thm} \label{thm:lct-estimate}
Let $(X,\Delta)$ be a pair. Let $D$ be an effective $\bQ$-divisor on $X$ and $L$ a line bundle. Let $\lambda>0$ be a constant. Assume the following:
    \begin{enumerate}
        \item $L-(K_X+\Delta+(1-\epsilon)D)$ is nef and big and $(X,\Delta+(1-\epsilon)D)$ is klt outside a finite set $T$ of points for all $0<\epsilon\ll 1$;
        \item for all $0$-dimensional subschemes $\Sigma\subseteq X$ supported on $T$ such that $\ell(\cO_\Sigma)\le h^0(X,L)$, we have $\lct(X,\Delta;\Sigma)\ge\lambda$.
    \end{enumerate}
Then $\lct(X,\Delta;D)\ge\frac{\lambda}{\lambda+1}$. Moreover, when equality holds, there exists some $0$-dimensional subscheme $\Sigma\subseteq X$ satisfying the assumption $(2)$ such that every divisor that computes $\lct(X,\Delta;D)$ also computes $\lct(X,\Delta;\Sigma)=\lambda$.
\end{thm}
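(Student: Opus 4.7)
The strategy is to use Nadel vanishing to convert the hypothesis into a length bound on a $0$-dimensional subscheme, then feed this into assumption (2) via the multiplier ideal.

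For each small $\epsilon>0$, set $\cJ_\epsilon:=\cJ(X,\Delta+(1-\epsilon)D)$. By assumption (1) the pair $(X,\Delta+(1-\epsilon)D)$ is klt off the finite set $T$, so $\cJ_\epsilon$ is cosupported on $T$ and cuts out a $0$-dimensional subscheme $\Sigma_\epsilon\subseteq X$. Applying Nadel/Kawamata--Viehweg vanishing to the nef and big class $L-(K_X+\Delta+(1-\epsilon)D)$ yields $H^1(X,L\otimes\cJ_\epsilon)=0$, so the restriction $H^0(X,L)\twoheadrightarrow H^0(\Sigma_\epsilon,L|_{\Sigma_\epsilon})$ is surjective. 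Since $L$ is a line bundle and $\Sigma_\epsilon$ is $0$-dimensional, this gives $\ell(\cO_{\Sigma_\epsilon})\le h^0(X,L)$, so hypothesis (2) applies and yields $\lct(X,\Delta;\Sigma_\epsilon)\ge\lambda$.

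I would then translate this bound back to $D$ on the level of divisorial valuations. For any divisorial valuation $v$ centered in $T$, the inequality $\lct(X,\Delta;\cJ_\epsilon)\ge\lambda$ reads $A_X(v)-v(\Delta)\ge\lambda\,v(\cJ_\epsilon)$, while the defining formula of the multiplier ideal (absorbing the integer floor into an inequality) gives $v(\cJ_\epsilon)\ge v(\Delta)+(1-\epsilon)v(D)-A_X(v)$. Combining and rearranging yields
\begin{equation*}
(\lambda+1)\bigl(A_X(v)-v(\Delta)\bigr)\ge\lambda(1-\epsilon)\,v(D),
\end{equation*}
and for $v$ centered off $T$ the same bound follows at once from the fact that $(X,\Delta+(1-\epsilon)D)$ is klt at $v$. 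Hence $\lct(X,\Delta;D)\ge\frac{\lambda(1-\epsilon)}{\lambda+1}$ for all small $\epsilon>0$, and letting $\epsilon\to 0$ proves the main inequality.

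For the moreover statement, I would invoke the piecewise-constancy of $\cJ(X,\Delta+tD)$ in $t$ to conclude that $\cJ_\epsilon$ stabilizes to a single ideal $\cJ$ for all $\epsilon$ in some right neighborhood of $0$; let $\Sigma$ be the associated $0$-dimensional subscheme. If $v$ is any divisorial valuation computing $\lct(X,\Delta;D)=\frac{\lambda}{\lambda+1}$, so that $A_X(v)-v(\Delta)=\frac{\lambda}{\lambda+1}v(D)$, then the two-sided bound
\begin{equation*}
\frac{1-\epsilon(\lambda+1)}{\lambda+1}\,v(D)\le v(\cJ)\le\frac{A_X(v)-v(\Delta)}{\lambda}=\frac{v(D)}{\lambda+1}
\end{equation*}
forces $v(\cJ)=\frac{v(D)}{\lambda+1}$ after sending $\epsilon\to 0$, whence $v$ computes $\lct(X,\Delta;\Sigma)=\lambda$. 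The main obstacle I anticipate is the bookkeeping of the integer rounding in the multiplier ideal formula, which is cleanly circumvented by working in the limit $\epsilon\to 0$ together with the stabilization of $\cJ_\epsilon$; the Nadel input and the klt-outside-$T$ hypothesis do all the essential geometric work.
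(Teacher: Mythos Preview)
Your argument is correct and essentially identical to the paper's: both use Nadel vanishing to bound $\ell(\cO_\Sigma)$ by $h^0(X,L)$, feed this into assumption (2), and then combine the resulting inequality $A_{(X,\Delta)}(v)\ge\lambda\,v(\cJ)$ with the multiplier-ideal bound $v(\cJ)\ge(1-\epsilon)v(D)-A_{(X,\Delta)}(v)$ to obtain $\lct(X,\Delta;D)\ge\frac{\lambda}{\lambda+1}$. The only cosmetic differences are that the paper fixes a single divisor $E$ computing the lct (after disposing of the case $\lct\ge1$), notes the stabilization of $\cJ_\epsilon$ at the outset rather than deferring it to the equality clause, and passes to the limit $\epsilon\to0$ inside the inequality $\ord_E(f)>(1-\epsilon)\ord_E(D)-A_{(X,\Delta)}(E)$ rather than at the end.
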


For example, if $h^0(X,L)=0$, then we may choose $\lambda$ to be any constant, and the theorem implies that $\lct(X,\Delta;D)\ge1$. As another example, if $X$ is smooth, $\Delta=0$ and $L$ is the trivial line bundle, then $\lambda=n=\dim X$ satisfies the assumption (2) and we have $\lct(X;D)\ge\frac{n}{n+1}$, with equality if and only if $\mult_x(D)=n+1$ for some $x\in X$ (since $\lct(X;x)$ is computed exactly by the blowup of $x$ in this case). These observations lead to a simple proof of the following well-known result.

\begin{cor}[\cite{CP-hypersurface-lct,dFEM-bounds-on-lct}] \label{cor:hypersurface lct}
Let $X\subseteq\bP^{n+1}$ $(n\ge3)$ be a smooth hypersurface of degree $d$ and let $H$ be the hyperplane class. Then
    \begin{enumerate}
        \item $\lct(X;|H|_\bQ)=1$ if $d\le n$,
        \item $\lct(X;|H|_\bQ)\ge\frac{n}{d}$ if $d\ge n+1$.
    \end{enumerate}
In the latter case, equality holds if and only if $X$ has an Eckardt point $($i.e., there exists a hyperplane section with multiplicity $d$ at the point$)$.
\end{cor}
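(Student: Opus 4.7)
The plan is to apply Theorem~\ref{thm:lct-estimate} to the pair $(X,\Delta=0)$ with any effective $D\sim_\bQ H$, choosing the line bundle $L$ and the constant $\lambda$ differently in the two regimes. Since $n\ge 3$, Lefschetz gives $\Pic(X)=\bZ H$, so any prime divisor $E\subseteq\Supp D$ with $\mult_E D\ge 1$ must be of class $eH$ for some $e\ge 1$, and together with $D\sim_\bQ H$ and effectiveness this forces $D=E$ with coefficient $1$. Hence the non-klt locus of $(X,(1-\epsilon)D)$ has no codimension-one component, and a further codimension count using the Picard-rank-one structure will show it is $0$-dimensional, verifying the klt-outside-finite-set hypothesis of Theorem~\ref{thm:lct-estimate}.

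For part~(1), when $d\le n$, I would take $L=\cO_X(-H)$. Since $H$ is ample on $X$, $h^0(X,L)=0$, so hypothesis~(2) of Theorem~\ref{thm:lct-estimate} is vacuous and $\lambda$ may be chosen arbitrarily large. Hypothesis~(1) reduces to ampleness of $L-K_X-(1-\epsilon)D\sim_\bQ(n-d+\epsilon)H$, which is immediate for $d\le n$ and small $\epsilon>0$. Applying the theorem and sending $\lambda\to\infty$ yields $\lct(X;D)\ge 1$.

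For part~(2), when $d\ge n+1$, I would take $L=\cO_X((d-n-1)H)$ and $\lambda=\tfrac{n}{d-n}$. The choice of $L$ makes $L-K_X-(1-\epsilon)D\sim_\bQ\epsilon H$ ample, and the restriction sequence on $\bP^{n+1}$ gives $h^0(X,L)=\binom{d}{n+1}$. For every $0$-dimensional $\Sigma$ supported at a point $p$ with $\ell(\cO_\Sigma)\le\binom{d}{n+1}$, I would verify $\lct(X;\Sigma)\ge\tfrac{n}{d-n}$ by comparison with $V(\fm_p^{d-n})$, which has length $\binom{d-1}{n}$ and lct equal to $\tfrac{n}{d-n}$; Theorem~\ref{thm:lct-estimate} then gives $\lct(X;D)\ge\tfrac{\lambda}{\lambda+1}=\tfrac{n}{d}$. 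For the equality characterization, if $\lct(X;D)=\tfrac{n}{d}$, the ``moreover'' clause produces a $0$-dimensional $\Sigma$ and a divisor $F$ that simultaneously computes $\lct(X;D)$ and $\lct(X;\Sigma)=\tfrac{n}{d-n}$. Since the only divisor computing $\lct(X;\fm_p^{d-n})$ is $\ord_p$ (the exceptional divisor of the ordinary blow-up of $p$), it follows that $F=\ord_p$ and hence $\tfrac{n}{d}=\tfrac{n}{\mult_p D}$, forcing $\mult_p D=d$; then $D\sim_\bQ H$ combined with the maximum multiplicity $d$ at $p$ identifies $D$ as the unique hyperplane section with multiplicity $d$ at $p$, so $p$ is an Eckardt point. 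The converse is immediate by direct computation on the ordinary blow-up at an Eckardt tangent hyperplane section.

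The principal obstacle will be verifying hypothesis~(2) of Theorem~\ref{thm:lct-estimate} for \emph{all} $0$-dimensional subschemes within the length bound $\binom{d}{n+1}$ (not only the monomial ideals $\fm_p^k$), and for $\Sigma$ potentially supported at several points of the finite non-klt locus; this length-vs-lct comparison becomes most delicate when $d$ is significantly larger than $n$. A secondary issue is the rigorous proof that the non-klt locus of $(X,(1-\epsilon)D)$ really is $0$-dimensional, which should follow from $\Pic(X)=\bZ H$ but requires a careful codimension-by-codimension exclusion of non-klt centers of every intermediate codimension.
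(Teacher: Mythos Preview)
Your treatment of part~(1) matches the paper's exactly.

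For part~(2), however, there is a genuine gap that cannot be filled: the length--lct bound you need is \emph{false} once $d\ge 2n+1$. Take $\Sigma=V(\fm_p^{\,d-n+1})$. Then $\ell(\cO_\Sigma)=\binom{d}{n}$, and since $\binom{d}{n+1}/\binom{d}{n}=\frac{d-n}{n+1}\ge 1$ precisely when $d\ge 2n+1$, this $\Sigma$ satisfies the length bound $\ell(\cO_\Sigma)\le h^0(X,L)=\binom{d}{n+1}$. But $\lct(X;\Sigma)=\frac{n}{d-n+1}<\frac{n}{d-n}$, so hypothesis~(2) of Theorem~\ref{thm:lct-estimate} fails for your choice of $\lambda$. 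Your proposed ``comparison with $V(\fm_p^{d-n})$'' thus does not survive: already the next power of the maximal ideal violates it. So what you flag as ``the principal obstacle'' is not merely delicate---it is an obstruction.

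The paper sidesteps this entirely. Rather than working on $X$ with a growing $h^0$, it takes a general linear projection $\gamma:X\to\bP^n$ that is \'etale near the point $x$ in question, and applies Theorem~\ref{thm:lct-estimate} to the \emph{rescaled} divisor $\frac{n+1}{d}\gamma(D)$ on $\bP^n$ with $L=\cO_{\bP^n}$. Now $h^0(L)=1$, so the only subscheme to consider is the reduced point $\gamma(x)$, whose lct is exactly $n$; hence $\lambda=n$ works uniformly in $d$, and the conclusion $\frac{\lambda}{\lambda+1}=\frac{n}{n+1}$ rescales to $\frac{n}{d}$ on $X$. The equality case then falls out cleanly: the only divisor computing $\lct(\bP^n;\gamma(x))$ is the ordinary blowup, forcing $\mult_x D=d$, and an intersection with the tangent hyperplane section shows $D$ must itself be a hyperplane section. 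Compared with your route, the projection--rescaling trick trades a hard combinatorial length bound for the trivial case $h^0=1$.

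On your secondary issue: the $0$-dimensionality of the non-klt locus is established in the paper by citing Pukhlikov's estimate that $\mult_x D\le 1$ outside finitely many points; your Picard-rank-one argument only excludes divisorial components and does not by itself rule out curves or higher-dimensional centers.
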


More interesting applications come in when we apply Theorem \ref{thm:lct-estimate} to the case when $L$ has some positivity. Indeed, the proofs of most results in this article consist of multiple uses of Theorem \ref{thm:lct-estimate} in this setting. In those cases, we can usually find the constant $\lambda$ by the work of \cite{dFEM-mult-and-lct} (or its variants) and this in particular yields the following.

\begin{cor} \label{cor:lct>1/2}
Let $X,D,L$ be as in Theorem \ref{thm:lct-estimate} and let $\Delta=0$. Assume that $X$ is smooth and $h^0(X,L)\le \frac{n^n}{n!}$. Then $\lct(X;D)>\frac{1}{2}$.
\end{cor}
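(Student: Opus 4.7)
The natural approach is to apply Theorem \ref{thm:lct-estimate} with $\lambda = 1$: this will yield $\lct(X;D) \ge \frac{1}{2}$, after which the moreover clause can be used to upgrade the estimate to a strict inequality.

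To justify taking $\lambda = 1$, I need to verify hypothesis (2) of Theorem \ref{thm:lct-estimate}, namely that $\lct(X;\Sigma) \ge 1$ for every $0$-dimensional subscheme $\Sigma$ with $\ell(\cO_\Sigma) \le h^0(X,L)$. Since both sides localise, pick $x \in \Supp \Sigma$ and let $\fa = \cI_\Sigma \cdot \cO_{X,x}$. By the main multiplicity-lct bound of \cite{dFEM-mult-and-lct},
\[
\lct_x(X;\fa) \ge \frac{n}{e(\fa)^{1/n}},
\]
and Lech's classical inequality in the regular local ring $\cO_{X,x}$ gives $e(\fa) \le n! \cdot \ell(\cO_{X,x}/\fa)$. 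Combined with the hypothesis $\ell(\cO_\Sigma) \le h^0(X,L) \le n^n/n!$, these chain together to give $\lct_x(X;\fa) \ge n/(n! \cdot n^n/n!)^{1/n} = 1$, so Theorem \ref{thm:lct-estimate} applies and yields $\lct(X;D) \ge \frac{1}{2}$.

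For the strict inequality, suppose toward a contradiction that $\lct(X;D) = \frac{1}{2}$. The moreover clause of Theorem \ref{thm:lct-estimate} produces $\Sigma_0$ with $\ell(\cO_{\Sigma_0}) \le h^0(X,L)$ and $\lct(X;\Sigma_0) = 1$, and tracing back through the bounds forces equality throughout at some point $x$: $e(\fa) = n^n$, $\ell(\cO_{X,x}/\fa) = n^n/n!$, and equality in the dFEM estimate. The equality case of the latter forces the integral closure $\overline{\fa}$ to be a power of $\fm$, which combined with $e(\fa) = n^n$ gives $\overline{\fa} = \fm^n$. But then $\fa \subseteq \fm^n$ and
\[
\ell(\cO_{X,x}/\fa) \ge \ell(\cO_{X,x}/\fm^n) = \binom{2n-1}{n},
\]
which exceeds $n^n/n!$ for every $n \ge 2$, a contradiction.

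\textbf{Main obstacle.} The weak inequality $\lct(X;D) \ge \frac{1}{2}$ drops out almost immediately once the dFEM and Lech estimates are chained together; the real work lies in the strict inequality, which relies essentially on the moreover clause of Theorem \ref{thm:lct-estimate} together with the equality characterisation of the dFEM bound (identifying the extremal $\fa$ up to integral closure as a power of the maximal ideal). Handling this equality case cleanly is the main technical point, and the length threshold $n^n/n!$ is calibrated so that the combinatorial inequality $\binom{2n-1}{n} > n^n/n!$ just barely excludes the extremal configuration.
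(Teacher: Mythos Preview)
Your argument is correct (for $n\ge 2$, which the paper also tacitly assumes), but it follows a different path from the paper's own proof. The paper works through the lattice-point framework of Lemma~\ref{lem:sigma_n and lct} and Corollary~\ref{cor:lct>1/(1+lambda)}: it observes that for any $\mathbf{a}\in\bR^n_+$ with $(1,\dots,1)\in\overline{Q_{\mathbf{a}}}$ the number of lattice points in $Q_{\mathbf{a}}$ strictly exceeds $\vol(Q_{\mathbf{a}})\ge n^n/n!$ (this volume estimate is exactly the AM--GM step in the proof of \cite{dFEM-mult-and-lct}), whence $\bar{\sigma}_{n,1}>n^n/n!$ and Corollary~\ref{cor:lct>1/(1+lambda)} with $\lambda=1$ gives the \emph{strict} inequality $\lct(X;D)>\tfrac12$ in a single stroke. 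By contrast, you invoke the dFEM inequality and Lech's inequality as black boxes to obtain the weak bound $\lct(X;D)\ge\tfrac12$, and then run a separate equality-case analysis: the moreover clause of Theorem~\ref{thm:lct-estimate} plus the dFEM equality characterisation $\overline{\fa}=\fm^n$ force $\ell(\cO_{X,x}/\fa)\ge\binom{2n-1}{n}>n^n/n!$, contradicting the length bound. Your route trades the lattice-point/volume comparison for the (equally standard) equality case of dFEM and an extra combinatorial inequality; it is a bit longer but entirely valid, and has the minor advantage of making transparent exactly where strictness is won.
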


As will be clear from the proof, the number $\frac{n^n}{n!}$ can be replaced by the minimum number of lattice points in the simplex $Q_{\mathbf{a}}=\{\mathbf{x}\in\bR^n_{\ge0}\,|\,\mathbf{a}\cdot\mathbf{x} < 1\}$ among all possible choices of $\mathbf{a}\in\bR^n_+$ such that $(1,1,\cdots,1)\in\overline{Q_{\mathbf{a}}}$. More generally, if $X$ is singular, we may replace it by the minimal non-klt colengths (see Section \ref{sec:lct estimate}) of the singularities. These observations will be important in the appendix where we study complete intersections of higher index and in the forthcoming work \cite{LZ-singular-cpi} when we consider singular complete intersections.

Apart from its obvious connection to Theorem \ref{thm:superrigidity implies K-stability}, Corollary \ref{cor:lct>1/2} is also a key step in the proof of Theorem \ref{thm:superrigidity}. The idea is that, given a movable boundary $M\sim_\bQ -K_X$ on a complete intersection $X$ of index $1$, one can usually show that $(X,2M)$ is log canonical outside a set of small dimension (as in  \cite{dF-hypersurface}), so after cutting down by hyperplanes, we can always reduce to the setting of Theorem \ref{thm:lct-estimate} and it suffices to show that $\lct(X;2M)\ge \frac{1}{2}$. In the hypersurface case \cite{dFEM-bounds-on-lct,dF-hypersurface}, this is done by projecting $X$ to $\bP^n$ and then applying \cite{dFEM-bounds-on-lct,dFEM-mult-and-lct}. Such strategy does not seem to carry over to complete intersections since the projection is in general a hypersurface of large degree (compared to the dimension) and the bound on log canonical threshold given by the argument of \cite{dFEM-bounds-on-lct,dF-hypersurface} is not sufficient. To get around this issue, we estimate the log canonical threshold using local information of the multiplier ideal (which is also used in the argument of \cite{dF-hypersurface}) in a way that does not require taking projections, and Corollary \ref{cor:lct>1/2} plays an important role here.

This paper is organized as follows. In Section \ref{sec:prelim}, we collect some important definitions and results that are used throughout the paper. Theorem \ref{thm:lct-estimate} is proved in Section \ref{sec:lct estimate}, where we also apply it to many different problems, proving along the way Corollaries \ref{cor:hypersurface lct} and \ref{cor:lct>1/2} and Theorems \ref{thm:superrigidity} and \ref{thm:K-stability of cpi}(2). In Section \ref{sec:criterion}, we prove the K-(semi)stability criterion, Theorem \ref{thm:criterion}, and then apply it to prove the K-stability of $X_{2,3}\subseteq\bP^5$ in Section \ref{sec:X_2,3}. In the appendix (written jointly with C. Stibitz), we consider Fano complete intersections of higher index and prove their conditional birational superrigidity.

\subsection*{Acknowledgement}

The author would like to thank his advisor J\'anos Koll\'ar for constant support, encouragement and numerous inspiring conversations. He also extends his thanks to Yuchen Liu, Xiaowei Wang and Chenyang Xu for several interesting conversations around K-stability; to Simon Donaldson for his interest and comments; to Weibo Fu, Lue Pan, Charlie Stibitz and Fan Zheng for helpful discussions; and to the anonymous referee(s) for careful reading of the manuscript.

\section{Preliminary} \label{sec:prelim}

\subsection{Notation and conventions}

We work over the field $\bC$ of complex numbers throughout the paper. Unless otherwise specified, all varieties are assumed to be projective and normal, and divisors are understood as $\bQ$-divisors. A \emph{pair} $(X,D)$ consists of a variety $X$ and an effective divisor $D\subseteq X$ such that $K_X+D$ is $\bQ$-Cartier. The notions of terminal, canonical, klt and log canonical (lc) singularities are defined in the sense of \cite[Definition 2.8]{mmp}. A variety $X$ is said to be $\bQ$-Fano if $-K_X$ is $\bQ$-Cartier and ample and $X$ has klt singularities. Let $(X,\Delta)$ be a pair and $D$ a $\bQ$-Cartier divisor on $X$, the \emph{log canonical threshold}, denoted by $\lct(X,\Delta;D)$ (or simply $\lct(X;D)$ when $\Delta=0$), of $D$ with respect to $(X,\Delta)$ is the largest number $t$ such that $(X,\Delta+tD)$ is log canonical. Similarly, the notation $\lct(X,\Delta;|D|_\bQ)$ (so-called global log canonical threshold) stands for the infimum of $\lct(X,\Delta;D')$ among all effective divisors $D'\sim_\bQ D$ while $\lct(X,\Delta;Z)$ refers to the log canonical threshold of a subscheme $Z\subseteq X$.

\subsection{K-stability}

We refer to \cite{Tian-K-stability-defn,Don-K-stability-defn} for the original definition of K-stability using test configurations. In the rest of this article we use the following equivalent valuative criterion.

\begin{defn}[{\cite[Definition 1.1]{Fujita-valuative-criterion}}] \label{defn:threshold and beta}
Let $X$ be a variety of dimension $n$ and $L$ an ample divisor on $X$. Let $F$ be a prime divisor over $X$, i.e., there exists a projective birational morphism $\pi: Y\to X$ with $Y$ normal such that $F$ is a prime divisor on $Y$.
    \begin{enumerate}
        \item For any $x\ge 0$, we define $\vol_X(L-xF):=\vol_Y(\pi^*L-xF)$.
        \item The \emph{pseudo-effective threshold} $\tau(L,F)$ (or simply $\tau(F)$ when the choice of $L$ is clear) of $L$ with respect to $F$ is defined as
        \[\tau(F):=\sup\{\tau>0\,|\, \vol_X(L-\tau F)>0\}.\]
        \item Let $A_X(F)$ be the log discrepancy of $F$ with respect to $X$. We set
        \[\beta(F):=A_X(F)\cdot(L^n)-\int_0^{\infty}\vol_X(L-xF)\mathrm{d}x.\]
        \item $F$ is said to be \emph{dreamy} (with respect to $L$) if the graded algebra
        \[\bigoplus_{k,j\in\bZ_{\geq 0}}H^0(Y, kr\pi^*L-jF)\]
        is finitely generated for some (hence, for any) $r\in\bZ{>0}$ with $rL$ Cartier. 
    \end{enumerate}
All the above definitions do not depend on the choice of the morphism $\pi: Y\to X$ (they only depend on the divisorial valuation on the function field of $X$ given by $F$). When $X$ is a $\bQ$-Fano variety, we define the corresponding $\tau(F)$ and $\beta(F)$ by taking $L=-K_X$.
\end{defn}

\begin{thm}[{\cite[Theorems 1.3 and 1.4]{Fujita-valuative-criterion} and \cite[Theorem 3.7]{Li-equivariant-minimize}}] \label{thm:beta-criterion}
Let $X$ be a $\bQ$-Fano variety. Then $X$ is K-stable $($resp.\ K-semistable$)$ if and only if $\beta(F)>0$ $($resp.\ $\beta(F)\geq 0)$ holds for any dreamy prime divisor $F$ over $X$. 
\end{thm}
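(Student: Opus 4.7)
The plan is to establish a two-way correspondence between dreamy prime divisors $F$ over $X$ and test configurations of $(X,-K_X)$, under which Fujita's invariant $\beta(F)$ matches, up to a fixed positive multiple, the Donaldson--Futaki invariant $\mathrm{DF}$ of the associated configuration. Granting such a correspondence, Theorem \ref{thm:beta-criterion} is immediate from the original definition, which says that $X$ is K-(semi)stable exactly when $\mathrm{DF}>0$ (resp.\ $\geq 0$) for every non-trivial test configuration.

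For the direction \emph{test configuration $\Rightarrow$ dreamy divisor}, I would start with an arbitrary normal test configuration $(\cX,\cL)$ of $(X,-K_X)$ and apply the MMP machinery of Li--Xu to replace it by a \emph{special} test configuration $(\cX^{\mathrm{s}},-K_{\cX^{\mathrm{s}}})$ whose $\mathrm{DF}$ does not exceed that of the original one. Since the central fiber of a special test configuration is a prime divisor on $\cX^{\mathrm{s}}$, the trivialization $\cX^{\mathrm{s}}\setminus\cX^{\mathrm{s}}_0 \simeq X\times(\bA^1\setminus\{0\})$ shows that $\ord_{\cX^{\mathrm{s}}_0}$ restricts to a divisorial valuation on the function field of $X$, yielding a prime divisor $F$ over $X$. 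Dreaminess of $F$ then follows from relative ampleness of $-K_{\cX^{\mathrm{s}}}$, which forces finite generation of the associated bigraded section ring.

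Conversely, starting from a dreamy $F$ with birational model $\pi\colon Y\to X$, the finitely generated bigraded algebra $\bigoplus_{k,j\geq 0}H^0(Y,-kr\pi^*K_X-jF)$ induces a $\bZ$-filtration on the anticanonical ring of $X$, and the Rees construction applied to this filtration produces a flat $\bG_m$-equivariant family $\cX\to\bA^1$ together with a relatively ample polarization $\cL$; this is the desired test configuration. The key computation is the identity
\[
\mathrm{DF}(\cX,\cL)\;=\;c_n\cdot\beta(F)
\]
for a universal positive constant $c_n$, obtained by combining Odaka's intersection-number formula for $\mathrm{DF}$ with an Okounkov-body style interpretation of $\int_0^{\tau(F)}\vol_X(-K_X-xF)\,dx$: the volume integral recovers the leading coefficient in the central-fiber expansion, while $A_X(F)$ matches the log-discrepancy correction coming from $K_{\cX/\bA^1}$.

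The main obstacle I expect is the MMP reduction step in the first direction, together with careful bookkeeping of $\mathrm{DF}$ under each birational modification (taking the log canonical modification, running a $(-K_{\cX})$-MMP over $\bA^1$, and contracting to a Fano central fiber). Making this reduction compatible with the $\mathrm{DF}$ inequality is delicate, and it is precisely this bookkeeping that justifies restricting attention to \emph{dreamy} divisors: the finite generation built into that hypothesis is exactly what is needed for the Rees algebra to produce a genuine test configuration with the expected $\mathrm{DF}$, so that no non-dreamy valuation can secretly violate K-stability.
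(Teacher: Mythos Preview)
The paper does not prove Theorem~\ref{thm:beta-criterion}. It is stated in the preliminaries with attribution to \cite{Fujita-valuative-criterion} and \cite{Li-equivariant-minimize} and is used as a black box; there is no argument in the paper to compare your proposal against.

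As for the proposal itself, the broad outline---associate to a dreamy divisor a test configuration via the Rees algebra of the induced filtration, compute that $\mathrm{DF}$ equals a positive multiple of $\beta(F)$, and conversely reduce an arbitrary test configuration to a special one via Li--Xu so that its central fiber yields a divisorial valuation---is the correct shape of the argument in the cited references. A few points would need tightening if you actually carried this out. First, K-stability requires $\mathrm{DF}>0$ only for test configurations that are not of product type, so in the ``divisor $\Rightarrow$ test configuration'' direction you must check that the Rees construction from a non-trivial dreamy $F$ gives a non-product configuration (and handle the trivial valuation separately). Second, the Li--Xu reduction only guarantees $\mathrm{DF}$ does not increase, so to conclude K-stability from $\beta(F)>0$ for all dreamy $F$ you need the strict-inequality refinement (equality forces the original configuration to already be of product type); this is where the argument is most delicate and is treated carefully in Fujita's paper. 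Third, the constant $c_n$ relating $\mathrm{DF}$ and $\beta$ depends on normalization conventions and is not literally universal in $n$ alone---it involves $(-K_X)^n$---so the identity should be stated with that in mind.
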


\subsection{Birational superrigidity} \label{sec:prelim-rigidity}

A Fano variety $X$ is said to be birationally superrigid if it has terminal singularities, it is $\bQ$-factorial of Picard number one and every birational map $f:X\dashrightarrow Y$ from $X$ to a Mori fiber space is an isomorphism (see e.g. \cite[Definition 1.25]{Noether-Fano}). In particular, birationally superrigid Fano varieties are not rational. For this paper, the following equivalent characterization using maximal singularities (sometimes also referred to as the Noether-Fano inequality) is more useful.

\begin{defn} \label{defn:movable boundary}
Let $(X,D)$ be a pair. A movable boundary on $X$ is defined as an expression of the form $a\cM$, where $a\in\bQ$ and $\cM$ is a movable linear system on $X$. Its $\bQ$-linear equivalence class is defined in an evident way. If $M=a\cM$ is a movable boundary, we say that the pair $(X,D+M)$ is klt (resp. canonical, lc) if for $k\gg 0$ and for general members $D_1,\cdots,D_k$ of the linear system $\cM$, the pair $(X,D+M_k)$ (where $M_k=\frac{a}{k}\sum_{i=1}^k D_i$) is klt (resp. canonical, lc) in the usual sense (alternatively, it can also be defined via the singularity type of $(X,D;\fb^a)$ where $\fb$ is the base ideal of $\cM$). For simplicity, we usually do not distinguish between the movable boundary $M$ and the actual divisor $M_k$ for suitable $k$.
\end{defn}

\begin{thm}[{\cite[Theorem 1.26]{Noether-Fano}}]
Let $X$ be a Fano variety. Then it is birationally superrigid if and only if it has $\bQ$-factorial terminal singularities, it has Picard number one, and for every movable boundary $M\sim_\bQ -K_X$ on $X$, the pair $(X,M)$ has canonical singularities.
\end{thm}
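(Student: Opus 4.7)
The plan is to establish the equivalence via the classical Noether--Fano framework: the structural conditions ($\bQ$-factoriality, terminality, $\rho(X)=1$) are built into both sides, so the real content is the equivalence between superrigidity and the canonical-pair condition on movable boundaries $\sim_\bQ -K_X$.

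For the direction ($\Leftarrow$), I would start with a birational map $f:X\dashrightarrow Y$ to a Mori fiber space $\pi:Y\to S$ and aim to show it is an isomorphism. First, fix $m\gg 0$ and an ample divisor $A$ on $S$ so that $\cM_Y := |{-}mK_Y+\pi^*A|$ is base-point-free, and take its strict transform $\cM := f^{-1}_*\cM_Y$ on $X$. Since $\rho(X)=1$ and $-K_X$ is ample, $\cM \sim_\bQ n(-K_X)$ for a unique $n>0$, so $M:=\frac{1}{n}\cM$ is a movable boundary $\sim_\bQ -K_X$; by hypothesis $(X,M)$ is canonical. Next, on a common smooth resolution $p:W\to X$, $q:W\to Y$, write $K_W = p^*K_X + \sum a_E E$ and $p_*^{-1}M = p^*M - \sum (\mult_E M)\,E$; the canonical condition gives $a_E \ge \mult_E M$ for every $p$-exceptional $E$. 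Comparing with the analogous expansions from $q$, a standard discrepancy calculation then shows that any prime divisor on $X$ contracted by $f$, or any divisor exceptional for $q$ but not for $p$ (i.e.\ a genuine maximal singularity of $\cM$), would force strict failure of canonicity at some valuation; hence none exist and $f$ is an isomorphism.

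For the direction ($\Rightarrow$), I would argue by contraposition: if $(X,M)$ is non-canonical for some movable boundary $M\sim_\bQ -K_X$, I would construct a non-isomorphic birational map from $X$ to a Mori fiber space. Choose $\epsilon>0$ small so that $(X,(1-\epsilon)M)$ remains klt but is still non-canonical at some geometric valuation $E$. Because $K_X+(1-\epsilon)M\sim_\bQ -\epsilon M$ is anti-pseudo-effective, a $(K_X+(1-\epsilon)M)$-MMP with scaling (available by BCHM) terminates with a Mori fiber space $g:X'\to S'$. The non-canonicity at $E$ forces this MMP to perform at least one non-trivial step (a flip, a divisorial contraction extracting a center of the non-canonical locus, or the emergence of a positive-dimensional base $S'$), so $X\dashrightarrow X'$ is not an isomorphism, contradicting superrigidity.

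The main obstacle will be the discrepancy bookkeeping in the $(\Leftarrow)$ direction: the inequalities $a_E \ge \mult_E M$ must be played against the positivity contribution of $\pi^*A$ on $Y$ and against the possible presence of maximal singularities over the indeterminacy locus of $f$, in order to rule out every prime divisor contracted by $f$. Tracking the explicit coefficients on $W$ — in particular showing that the $\pi^*A$-term cannot be absorbed by the negative contributions from $q$-exceptional divisors once $(X,M)$ is canonical — is the heart of the Noether--Fano inequality. The $(\Rightarrow)$ direction is by comparison routine once MMP with scaling is granted, although some care is needed to ensure the MMP step interacting with $E$ is actually non-trivial rather than an isomorphism in codimension one followed by a flop.
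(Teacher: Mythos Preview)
The paper does not supply a proof of this statement: it is quoted from \cite[Theorem~1.26]{Noether-Fano} in the preliminaries section with no accompanying argument, so there is nothing in the paper itself to compare your attempt against.

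That said, your $(\Rightarrow)$ direction contains a genuine gap. You propose to run a $(K_X+(1-\epsilon)M)$-MMP with scaling on $X$ and claim that non-canonicity of $(X,M)$ forces a non-trivial step. But $X$ already has Picard number one by hypothesis, so the only $(K_X+(1-\epsilon)M)$-negative extremal ray is the one giving $X\to\Spec\bC$; the MMP therefore terminates immediately with $X'=X$, regardless of the singularities of the pair. Non-canonicity of a boundary does not by itself create new extremal rays on the underlying variety, and your phrase ``a divisorial contraction extracting a center of the non-canonical locus'' conflates two opposite operations. The standard remedy is to first \emph{extract} a maximal center: take an extremal divisorial extraction $\pi:\tilde X\to X$ of a valuation $E$ with $a(E,X,M)<0$, so that $\rho(\tilde X)=2$, and only then play the two-ray game on $\tilde X$ to obtain a Sarkisov link to a genuinely different Mori fiber space. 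Your $(\Leftarrow)$ direction is the correct Noether--Fano skeleton, and you rightly identify the discrepancy comparison on a common resolution as the substantive step.
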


\section{Lower bounds of log canonical thresholds} \label{sec:lct estimate}

In this section we prove Theorem \ref{thm:lct-estimate} and its applications.

\begin{proof}[Proof of Theorem \ref{thm:lct-estimate}]
We may assume that $\lct(X,\Delta;D)<1$; otherwise there is nothing to prove. Let $0<\epsilon\ll 1$. By the first assumption, the multiplier ideal $\cJ=\cJ(X,\Delta+(1-\epsilon)D)$ defines a $0$-dimensional subscheme $\Sigma\subseteq X$ supported on $T$ that does not depend on $\epsilon$. Since $L-(K_X+\Delta+(1-\epsilon)D)$ is nef and big, by Nadel vanishing we have $H^1(X,\cJ(X,\Delta+(1-\epsilon)D)\otimes L)=0$; thus the natural restriction map $H^0(X,L)\to H^0(\Sigma, L|_\Sigma)\cong H^0(\Sigma, \cO_\Sigma)$ is surjective. In particular, $\ell(\cO_\Sigma)\le h^0(X,L)$. Hence, by our second assumption, $\lct(X,\Delta;\Sigma)\ge\lambda$. Now let $E$ be a divisor over $X$ that computes $\lct(X,\Delta;D)$. By the definition of the multiplier ideal, for every $f\in\cJ$  we have 
\[\ord_E(f)\ge \lfloor (1-\epsilon)\ord_E(D)-A_{(X,\Delta)}(E)+1 \rfloor > (1-\epsilon)\ord_E(D)-A_{(X,\Delta)}(E)
\]
where $A_{(X,\Delta)}(E)$ is the log discrepancy of $E$ with respect to $(X,\Delta)$. Letting $\epsilon\rightarrow 0$ we get
\begin{equation} \label{ineq:ord_E}
    \ord_E(f)\ge \ord_E(D)-A_{(X,\Delta)}(E).
\end{equation}
On the other hand, if $f$ is general in $\cJ$, then we have
\begin{equation} \label{ineq:lct_J}
    \frac{A_{(X,\Delta)}(E)}{\ord_E(f)}\ge \lct(X,\Delta;\Sigma)\ge \lambda.
\end{equation}
Combining these two inequalities we obtain $\lambda^{-1}A_{(X,\Delta)}(E)\ge \ord_E(D)-A_{(X,\Delta)}(E)$, which reduces to $\lct(X,\Delta;D)=\frac{A_{(X,\Delta)}(E)}{\ord_E(D)}\ge \frac{\lambda}{\lambda+1}$. If equality holds, then the inequality \eqref{ineq:lct_J} is an equality, hence, in particular, we have $\lct(X,\Delta;\Sigma)=\lambda$, and it is computed by $E$.
\end{proof}

\begin{rem} \label{rem:pointwise-estimate}
Using the same argument we can also get a pointwise statement as follows. Keeping notation from the above proof, let $\Sigma=\cup_{i=1}^r \Sigma_i$ be the decomposition of $\Sigma$ into connected components and let $x_i=\Supp(\Sigma_i)$. Then we have that $\lct(X,\Delta;\Sigma_i)\ge\lambda$ implies $\lct(X,\Delta;D)\ge\frac{\lambda}{\lambda+1}$ in a neighborhood of $x_i$, with equality if and only if every exceptional divisor centered at $x_i$ that computes $\lct(X,\Delta;D)$ around $x_i$ also computes $\lct(X,\Delta;\Sigma_i)$. This observation will be important in the proof below as well as in the last section.
\end{rem}

Let us first apply Theorem \ref{thm:lct-estimate} to compute log canonical thresholds on hypersurfaces. 

\begin{proof}[Proof of Corollary \ref{cor:hypersurface lct}]
Let $D\sim_\bQ H$ be an effective divisor on $X$. It suffices to show that $\lct(X;D)\ge\min\{\frac{n}{d},1\}$. By \cite[Proposition 5]{P-rigid-hypersurface}, $\mult_x D\le 1$ except at finitely many points $x\in X$, hence by \cite[(3.14.1)]{Kol-sing-of-pairs}, $(X,(1-\epsilon)D)$ is klt outside a finite set of points ($0<\epsilon\ll 1$). If $d\le n$, then we may apply Theorem \ref{thm:lct-estimate} with $L=\cO_X(-H)$, $\Delta=0$ and obtain $\lct(X;D)\ge \frac{\lambda}{\lambda+1}$ for any $\lambda>0$, thus $\lct(X;D)\ge 1$. If $d\ge n+1$, then let $x\in X$ and let $\gamma:X\rightarrow \bP^n$ be a general linear projection such that $\gamma$ is \'etale in the neighbourhood of $x$ and $\gamma|_D$ is injective in the neighbourhood of $\gamma(x)$. We then have $\lct(X;D)=\lct(\bP^n,\gamma(D))$ near $x$, and since $\frac{n+1}{d}\le 1$, $(\bP^n,\frac{n+1}{d}(1-\epsilon)\gamma(D))$ is klt in a punctured neighbourhood of $\gamma(x)$. We apply Theorem \ref{thm:lct-estimate} to the pair $(\bP^n,\frac{n+1}{d}\gamma(D))$ with $L=0\sim_\bQ K_{\bP^n}+\frac{n+1}{d}\gamma(D)$, $\Delta=0$, and $T=\{\gamma(x)\}$. Note that the only $0$-dimensional subscheme $\Sigma$ supported at $\gamma(x)$ with $\ell(\cO_\Sigma)\le h^0(\bP^n,\cO_{\bP^n})=1$ is the closed point $\gamma(x)$ itself, and for such a point we always have $\lct(\bP^n;\gamma(x))=n$. Hence we may take $\lambda=n$ and obtain $\lct(\bP^n;\frac{n+1}{d}\gamma(D))\ge \frac{n}{n+1}$. It follows that $(\bP^n,\frac{n}{d}\gamma(D))$ is log canonical at $\gamma(x)$ and hence $(X,\frac{n}{d}D)$ is log canonical at $x$ as well. Since $x\in X$ is arbitrary, we get $\lct(X;D)\ge \frac{n}{d}$. Suppose that equality $\lct(X;|H|_\bQ)=\frac{n}{d}$ holds, then by \cite[Theorem 1.5]{Birkar}, there exists $D\sim_\bQ -K_X$ (which we may assume to be irreducible as $X$ has Picard number one) with $\lct(X;D)=\frac{n}{d}$. Let $x\in X$ be a point where $(X,\frac{n}{d}D)$ is not klt and let $\gamma:X\rightarrow\bP^n$ be as before. Then by the equality case of Theorem \ref{thm:lct-estimate}, every divisor that computes $\lct(\bP^n;\gamma(D))$ also computes $\lct(\bP^n;\gamma(x))$. It follows that $\lct(\bP^n;\gamma(D))$ is computed by $\mult_{\gamma(x)}$ and hence $\mult_x D=\mult_{\gamma(x)} \gamma(D)=d$. If $D$ is not a hyperplane section, let $W=T_x X\cap X$ be the restriction of the tangent hyperplane at $x$, we have $\mult_x W\ge 2$ and $d=\deg(D\cdot W)\ge\mult_x(D\cdot W)\ge 2d$, a contradiction. Hence $D$ is a hyperplane section with multiplicity $d$ at $x$.
\end{proof}

Next we use Theorem \ref{thm:lct-estimate} to give some lower bounds of log canonical thresholds on complete intersections. To this end, we introduce the following definition.

\begin{defn}
Let $x\in (X,D)$ be a klt singularity. The minimal non-klt (resp. non-lc) colength of $x\in (X,D)$ with coefficient $\lambda$ is defined as
 \begin{align*}
    \lnklt(x,X,D;\lambda) & :=   \min\{\ell(\cO_X/\cJ)\,|\,\mathrm{Supp}(\cO_X/\cJ)=\{x\}\:\mathrm{and}\:(X,D;\cJ^\lambda)\:\mathrm{is}\:\mathrm{not}\:\mathrm{klt}\} \\
    (\mathrm{resp.}\;\lnlc(x,X,D;\lambda) & :=  \min\{\ell(\cO_X/\cJ)\,|\,\mathrm{Supp}(\cO_X/\cJ)=\{x\}\:\mathrm{and}\:(X,D;\cJ^\lambda)\:\mathrm{is}\:\mathrm{not}\:\mathrm{lc}\}).
 \end{align*}
When $D=0$, we use the abbreviation $\lnklt(x,X;\lambda)$ (resp. $\lnlc(x,X;\lambda)$).
\end{defn}

We can then rephrase Theorem \ref{thm:lct-estimate} in terms of minimal non-klt (resp. non-lc) colengths.

\begin{thm} \label{thm:colength and lct}
Let $(X,\Delta)$ be a klt pair, let $D$ an effective divisor on $X$ and let $L$ a line bundle such that $L-(K_X+\Delta+(1-\epsilon)D)$ is big and nef for $0<\epsilon\ll 1$. Assume that $(X,\Delta+D)$ is log canonical outside a finite set of points $T$ and that $h^0(X,L) < \lnklt(x,X,\Delta;\lambda)$ $($resp. $< \lnlc(x,X,\Delta;\lambda))$ for every $x\in T$. Then $\lct(X,\Delta;D)>\frac{\lambda}{\lambda+1}$ $($resp. $\ge \frac{\lambda}{\lambda+1})$.  \qed
\end{thm}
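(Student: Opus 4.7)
The plan is to deduce the statement directly from Theorem \ref{thm:lct-estimate} by translating the hypothesis on minimal non-klt (resp.\ non-lc) colengths into its assumption (2). First I would verify assumption (1) of Theorem \ref{thm:lct-estimate}: the positivity of $L-(K_X+\Delta+(1-\epsilon)D)$ is already given, and the klt-off-$T$ property of $(X,\Delta+(1-\epsilon)D)$ follows by combining the klt hypothesis on $(X,\Delta)$ with the log canonical hypothesis on $(X,\Delta+D)$ outside $T$: for any divisor $E$ over $X$ whose center is not in $T$, we have $A_{(X,\Delta+(1-\epsilon)D)}(E)=A_{(X,\Delta)}(E)-(1-\epsilon)\ord_E(D)$, which is strictly positive either because $\ord_E(D)=0$ (using kltness of $(X,\Delta)$) or because $\ord_E(D)>0$ together with $A_{(X,\Delta)}(E)\ge\ord_E(D)$ (using lcness of $(X,\Delta+D)$ at the center of $E$).

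Next I would verify assumption (2). Take any $0$-dimensional subscheme $\Sigma\subseteq X$ supported on $T$ with $\ell(\cO_\Sigma)\le h^0(X,L)$, and decompose $\Sigma=\sqcup_{x\in T}\Sigma_x$ into its local components. For each $x\in T$ we have
\[\ell(\cO_{\Sigma_x})\le\ell(\cO_\Sigma)\le h^0(X,L)<\lnklt(x,X,\Delta;\lambda)\quad(\text{resp.\ }<\lnlc(x,X,\Delta;\lambda)).\]
By the very definition of $\lnklt$ (resp.\ $\lnlc$) as a minimum, the pair $(X,\Delta;\cI_{\Sigma_x}^\lambda)$ must then be klt (resp.\ lc) at $x$, which is exactly the assertion $\lct(X,\Delta;\Sigma_x)>\lambda$ (resp.\ $\ge\lambda$). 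Taking the minimum over the finite set $T$ yields $\lct(X,\Delta;\Sigma)>\lambda$ (resp.\ $\ge\lambda$), so assumption (2) of Theorem \ref{thm:lct-estimate} is satisfied.

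Applying Theorem \ref{thm:lct-estimate} now gives $\lct(X,\Delta;D)\ge\frac{\lambda}{\lambda+1}$, which already settles the $\lnlc$ version. For the strict inequality in the $\lnklt$ version I would argue by contradiction: if the equality $\lct(X,\Delta;D)=\frac{\lambda}{\lambda+1}$ held, then the equality clause in Theorem \ref{thm:lct-estimate} would produce some $\Sigma$ as above computing $\lct(X,\Delta;\Sigma)=\lambda$, contradicting the strict bound $\lct(X,\Delta;\Sigma)>\lambda$ established in the previous step. The only delicate point in the whole argument is extracting the strict (rather than weak) inequality in the klt case; the built-in equality clause of Theorem \ref{thm:lct-estimate} is precisely what allows this without any additional effort, so no genuinely new ingredient is needed beyond bookkeeping of strict versus non-strict inequalities.
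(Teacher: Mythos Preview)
Your argument is correct and is exactly the intended one: the paper itself marks this theorem with a bare \qed, treating it as an immediate rephrasing of Theorem \ref{thm:lct-estimate}, and you have simply spelled out the verification of assumptions (1) and (2) together with the use of the equality clause to upgrade $\ge$ to $>$ in the $\lnklt$ case.
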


Hence for various applications, it suffices to find a suitable lower bound of the minimal non-klt (resp. non-lc) colengths and compare it with $h^0(X,L)$. In the smooth case, this can be given by the work of \cite{dFEM-mult-and-lct} (or more precisely, by the proof therein). To state the result, we need more notation: for $\mathbf{a}\in\bR^n_+$ and $\lambda>0$, let
\begin{align*}
    Q_{\mathbf{a}} & =  \{\mathbf{x}=(x_1,\cdots,x_n)\in\bR^n\,|\,x_1\ge 0,\cdots,x_n\ge0,\mathbf{a}\cdot\mathbf{x} < 1\}, \\
    \sigma_{n,\lambda} & =  \min\{\#(Q_\mathbf{a}\cap \bZ^n)\,|\,\mathbf{a}\in\bR^n_+\;\mathrm{s.t.}\;(\lambda,\lambda,\cdots,\lambda)\in Q_\mathbf{a}\}, \\
    \bar{\sigma}_{n,\lambda} & =  \min\{\#(Q_\mathbf{a}\cap \bZ^n)\,|\,\mathbf{a}\in\bR^n_+\;\mathrm{s.t.}\;(\lambda,\lambda,\cdots,\lambda)\in\overline{Q_\mathbf{a}}\}.
\end{align*}
Clearly $\sigma_{n,\lambda}\ge\bar{\sigma}_{n,\lambda}$.


\begin{lem} \label{lem:sigma_n and lct}
Let $X$ be a smooth variety of dimension $n$ and $x\in X$. Let $\lambda>0$. Then 
\[\lnlc(x,X;\lambda^{-1})\ge \sigma_{n,\lambda}, \quad \lnklt(x,X;\lambda^{-1})\ge \bar{\sigma}_{n,\lambda}.
\]
\end{lem}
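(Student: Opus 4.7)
The plan is to reduce to the case of a monomial ideal via Gr\"obner degeneration and then apply Howald's combinatorial formula for the log canonical threshold of a monomial ideal together with a separating-hyperplane argument; this is essentially the strategy of \cite{dFEM-mult-and-lct}.

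First I choose local coordinates $z_1,\ldots,z_n$ at $x\in X$ and a monomial order, and let $I=\mathrm{in}(\cJ)\subseteq\cO_{X,x}$ be the initial ideal. Then $I$ is an $\fm_x$-primary monomial ideal with $\ell(\cO_X/I)=\ell(\cO_X/\cJ)$, and by the lower semicontinuity of $\lct$ under the flat Rees-algebra degeneration $\cJ\rightsquigarrow I$ one has $\lct(I)\le\lct(\cJ)$. Hence if $(X;\cJ^{\lambda^{-1}})$ is not lc (resp.\ not klt) then neither is $(X;I^{\lambda^{-1}})$, and we may assume $\cJ=I$ is monomial.

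Writing $\Gamma(I)=\{\alpha\in\bZ^n_{\ge 0}:z^\alpha\in I\}$ and letting $P(I)\subseteq\bR^n_{\ge 0}$ denote the Newton polyhedron, one has $\ell(\cO_X/I)=\#(\bZ^n_{\ge 0}\setminus\Gamma(I))$. By Howald's formula, $(X;I^c)$ is klt iff $\mathbf{1}\in c\cdot\mathrm{Int}(P(I))$ and lc iff $\mathbf{1}\in c\cdot P(I)$, so the non-lc (resp.\ non-klt) assumption at $c=\lambda^{-1}$ translates to $\lambda\mathbf{1}\notin P(I)$ (resp.\ $\lambda\mathbf{1}\notin\mathrm{Int}(P(I))$). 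Strict (resp.\ supporting) hyperplane separation between the closed convex set $P(I)$ and the point $\lambda\mathbf{1}$, together with the fact that $\bR^n_{\ge 0}$ lies in the recession cone of $P(I)$, produces $\mathbf{a}\in\bR^n_{\ge 0}\setminus\{0\}$ with $\mathbf{a}\cdot(\lambda\mathbf{1})<1$ (resp.\ $\le 1$) and $\mathbf{a}\cdot v\ge 1$ for all $v\in P(I)$; in particular $\mathbf{a}\cdot\alpha\ge 1$ for every $\alpha\in\Gamma(I)$. Once such an $\mathbf{a}$ is chosen in $\bR^n_+$, the inclusion $Q_\mathbf{a}\cap\bZ^n\subseteq\bZ^n_{\ge 0}\setminus\Gamma(I)$ immediately gives
\[
\ell(\cO_X/I)\;=\;\#(\bZ^n_{\ge 0}\setminus\Gamma(I))\;\ge\;\#(Q_\mathbf{a}\cap\bZ^n)\;\ge\;\sigma_{n,\lambda}\ (\text{resp.\ }\bar\sigma_{n,\lambda}),
\]
as required.

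The main obstacle is thus arranging $\mathbf{a}\in\bR^n_+$ so that $Q_\mathbf{a}\cap\bZ^n$ is finite. In the non-lc case this is a direct perturbation: adding small $\epsilon>0$ to the zero entries of $\mathbf{a}$ preserves the strict inequality $\mathbf{a}\cdot(\lambda\mathbf{1})<1$ and only strengthens $\mathbf{a}\cdot\alpha\ge 1$ because $\alpha\in\bZ^n_{\ge 0}$. In the non-klt case the separating inequality is only weak, so I would first apply the non-lc argument with $\lambda'<\lambda$ in place of $\lambda$ (valid since $\lambda'\mathbf{1}\notin P(I)$) to obtain $\ell(\cO_X/I)\ge\sigma_{n,\lambda'}$, and then let $\lambda'\nearrow\lambda$, using the monotonicity of $\sigma_{n,\lambda'}$ in $\lambda'$ together with the easily verified identity $\bar\sigma_{n,\lambda}=\sup_{\lambda'<\lambda}\sigma_{n,\lambda'}$. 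This last identity is checked by a compactness argument on the minimizers $\mathbf{a}_{\lambda'}$: they are bounded (since $\lambda'\sum_i (\mathbf{a}_{\lambda'})_i<1$), and any subsequential limit must remain in $\bR^n_+$, for otherwise the lattice-point count $\#(Q_\mathbf{a}\cap\bZ^n)$ would become infinite, contradicting the uniform bound coming from $\bar\sigma_{n,\lambda}<\infty$.
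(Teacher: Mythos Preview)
Your proof is correct and follows essentially the same strategy as the paper: reduce to a monomial ideal by semicontinuity, apply Howald's formula, and read off the inequality from a supporting hyperplane of the Newton polytope. The paper avoids your final detour by taking the supporting hyperplane at the specific boundary point $(\mu,\ldots,\mu)$ with $\mu=\lct(\cJ)^{-1}$: since $\cJ$ is $\fm_x$-primary, each axis point $k_ie_i$ lies in $P$ for some $k_i$, forcing $a_i\ge 1/k_i>0$, so $\mathbf{a}\in\bR^n_+$ automatically; then $\mu>\lambda$ (resp.\ $\mu\ge\lambda$) directly gives $(\lambda,\ldots,\lambda)\in Q_{\mathbf{a}}$ (resp.\ $\overline{Q_{\mathbf{a}}}$), eliminating both your perturbation step and the limiting argument $\bar\sigma_{n,\lambda}=\sup_{\lambda'<\lambda}\sigma_{n,\lambda'}$.
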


\begin{proof}
We may assume that $(X,x)=(\mathbb{A}^n,0)$ since the statement is \'etale local. Moreover, as in the proof of \cite[Theorem 1.1]{dFEM-mult-and-lct}, we may assume that $\cJ\subseteq\cO_X$ is a monomial ideal by the lower semicontinuity (see e.g. \cite{DK-semicontinuity-lct}) of log canonical thresholds. Let $P$ be the Newton polytope of $\cJ$, defined as the convex hull in $\bR^n_{\ge0}$ of all the points corresponding to monomials in $\cJ$. By \cite{Howald}, letting $\mu=\lct(\mathbb{A}^n;\cJ)^{-1}$, we have
\[\mu=\min\{t>0\,|\,(t,t,\cdots,t)\in P\}.
\]
Let $W$ be a supporting hyperplane of $P$ at $(\mu,\cdots,\mu)\in \partial P$. Write the equation of $W$ as $\mathbf{a}\cdot \mathbf{x}=1$ where $\mathbf{a}\in\bR^n_+$, then we have $\ell(\cO_X/\cJ)=\#((\bR^n_{\ge0}\backslash P)\cap \bZ^n)\ge \#(Q_\mathbf{a}\cap \bZ^n)$. If $(X;\cJ^{1/\lambda})$ is not lc (resp. not klt), then $\mu>\lambda$ (resp. $\ge\lambda$), hence $(\lambda,\cdots,\lambda)\in Q_\mathbf{a}$ (resp. $\in\overline{Q_\mathbf{a}}$) and the lemma simply follows from the definition of $\sigma_{n,\lambda}$ (resp. $\bar{\sigma}_{n,\lambda}$).
\end{proof}

\begin{cor} \label{cor:lct>1/(1+lambda)}
Let $X,D,L$ be as in Theorem \ref{thm:lct-estimate} and $\Delta=0$. Assume that $X$ is smooth of dimension $n$ and $h^0(X,L)<\bar{\sigma}_{n,\lambda}$ $($resp. $<\sigma_{n,\lambda})$. Then $\lct(X;D)>\frac{1}{\lambda+1}$ $($resp. $\ge\frac{1}{\lambda+1})$.
\end{cor}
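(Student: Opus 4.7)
The proof proposal is essentially a direct combination of two already-established results: Theorem \ref{thm:colength and lct} (the colength reformulation of Theorem \ref{thm:lct-estimate}) and Lemma \ref{lem:sigma_n and lct} (the lattice-point lower bound on minimal non-klt/non-lc colengths in the smooth case). The only real subtlety is aligning the two parametrizations: Theorem \ref{thm:colength and lct} is stated in terms of a coefficient $\lambda$ controlling the singularity threshold $\lambda/(\lambda+1)$, whereas Lemma \ref{lem:sigma_n and lct} is stated with the reciprocal parameter $\lambda^{-1}$, and the target bound in the corollary is $1/(\lambda+1)$.

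The plan is therefore the following. First, I record the hypothesis of the corollary: $(X,\Delta=0)$ is klt, $L-(K_X+(1-\epsilon)D)$ is nef and big for $0<\epsilon\ll1$, and $(X,(1-\epsilon)D)$ is klt outside a finite set $T\subseteq X$. Next, I apply Theorem \ref{thm:colength and lct} with the coefficient $\lambda$ replaced by $\lambda^{-1}$; this yields $\lct(X;D)>\lambda^{-1}/(\lambda^{-1}+1)=1/(\lambda+1)$ (resp.\ $\ge 1/(\lambda+1)$) as soon as
\[
h^0(X,L)\;<\;\lnklt(x,X;\lambda^{-1})\qquad(\text{resp.}\;<\;\lnlc(x,X;\lambda^{-1}))
\]
for every $x\in T$. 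Finally, since $X$ is smooth, Lemma \ref{lem:sigma_n and lct} gives $\lnklt(x,X;\lambda^{-1})\ge\bar{\sigma}_{n,\lambda}$ and $\lnlc(x,X;\lambda^{-1})\ge\sigma_{n,\lambda}$ at every closed point, and the assumption $h^0(X,L)<\bar{\sigma}_{n,\lambda}$ (resp.\ $<\sigma_{n,\lambda}$) propagates the required pointwise inequalities. Chaining these three steps produces the claimed bound.

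There is no real obstacle to overcome: both ingredients have been established, and the argument reduces to a bookkeeping step matching $\lambda$ versus $\lambda^{-1}$ in the two formulations. The only thing I would double-check while writing it out is that the distinction between the open condition (klt, yielding $\bar{\sigma}$ and strict inequality) and the closed condition (lc, yielding $\sigma$ and non-strict inequality) is carried consistently through the substitution $\lambda\mapsto\lambda^{-1}$; this is automatic from the definitions of $\sigma_{n,\lambda}$ and $\bar{\sigma}_{n,\lambda}$, where the diagonal point $(\lambda,\dots,\lambda)$ sits respectively in $Q_{\mathbf{a}}$ or its closure.
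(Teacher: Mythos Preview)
Your proposal is correct and takes essentially the same approach as the paper, which simply states that the corollary is immediate from Theorem~\ref{thm:colength and lct} and Lemma~\ref{lem:sigma_n and lct}. Your explicit bookkeeping of the substitution $\lambda\mapsto\lambda^{-1}$ and the klt/lc versus $\bar{\sigma}/\sigma$ matching is exactly the content the paper leaves implicit.
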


\begin{proof}
This is immediate from Theorem \ref{thm:colength and lct} and Lemma \ref{lem:sigma_n and lct}.
\end{proof}

In light of this, all subsequent estimates of log canonical thresholds essentially reduce to finding lower bounds of $\sigma_{n,\lambda}$ (or $\bar{\sigma}_{n,\lambda}$). Here are some sample applications:

\begin{proof}[Proof of Corollary \ref{cor:lct>1/2}]
It is clear that $\bar{\sigma}_{n,1}>\vol(Q_\mathbf{a})\ge \frac{n^n}{n!}$ if $(1,\cdots,1)\in \overline{Q_\mathbf{a}}$ and $n\ge2$ (see the proof of \cite[Theorem 1.1]{dFEM-mult-and-lct}), so the result follows directly from Corollary \ref{cor:lct>1/(1+lambda)} with $\lambda=1$.
\end{proof}

\begin{lem} \label{lem:lct-cpi}
Let $X\subseteq\bP^{n+r}$ be a smooth Fano complete intersection of codimension $r$ and dimension $n\ge 6r$. Let $H$ be the hyperplane class. Then $\lct(X;|H|_\bQ)>\frac{1}{2}$.
\end{lem}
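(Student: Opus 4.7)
The plan is to apply Corollary~\ref{cor:lct>1/2} directly to every effective $D\sim_\bQ H$ on $X$, taking $\Delta=0$ and $L=\cO_X$. Since $-K_X=H$ is ample, the difference $L-(K_X+(1-\epsilon)D)=\epsilon H$ is ample for every $\epsilon>0$, and $h^0(X,L)=1\le n^n/n!$ because $n\ge 6r\ge 6$. Hence $\lct(X;D)>\tfrac{1}{2}$ will follow from Corollary~\ref{cor:lct>1/2} \emph{provided} that $(X,(1-\epsilon)D)$ is klt outside a finite set of points for $0<\epsilon\ll 1$. The whole lemma therefore reduces to showing that the non-klt locus of $(X,D)$ is zero-dimensional.

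To do this, I would first use the standard smooth-case estimate $\lct_x(X;D)\ge n/\mult_x D$ to contain the non-klt locus inside $Z:=\{x\in X:\mult_x D\ge n\}$, and then argue by contradiction that no positive-dimensional component $W\subseteq Z$ exists. Choose a general point $x\in W$ and any irreducible curve $C\subseteq X$ through $x$ with $C\not\subseteq\mathrm{Supp}(D)$; then $n\le\mult_x D\le D\cdot C=H\cdot C$, which forces every curve on $X$ of small $H$-degree through $x$ to lie in $\mathrm{Supp}(D)$. The union of such curves sweeps out a subvariety $R_x\subseteq X$ contained in $\mathrm{Supp}(D)$, whose dimension can be bounded below via standard estimates on Fano schemes of low-degree rational curves on smooth index-one complete intersections. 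Intersecting $R_x$ with a general linear section of $\bP^{n+r}$ of complementary dimension and comparing with $\deg X$ using $D\sim_\bQ H$ then yields the required numerical contradiction.

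The main obstacle will be the quantitative step of producing enough low-$H$-degree curves through a general point of an arbitrary positive-dimensional $W\subseteq X$, so that the resulting $R_x$ is forced to be larger than what can fit inside a single member of $|H|_\bQ$. This is precisely where the hypothesis $n\ge 6r$ enters: it gives enough room on $X$ for the Fano scheme of lines (and, if necessary, conics) through a general point to have the needed dimension, in the spirit of the Pukhlikov-style line-counting arguments on complete intersections and mirroring the codimension-estimate philosophy behind \cite[Proposition~5]{P-rigid-hypersurface} used in the proof of Corollary~\ref{cor:hypersurface lct}. If the curve-through-a-point approach proves delicate, an alternative route would be to project $X$ generically to $\bP^n$ and combine Corollary~\ref{cor:hypersurface lct} for the image with a careful analysis of the ramification/secant locus of the projection; but I expect the curve argument, with the quantitative bound $n\ge 6r$, to be the cleanest path.
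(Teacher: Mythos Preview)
Your proposal rests on two problematic points. First, the inequality you invoke, $\lct_x(X;D)\ge n/\mult_x D$, goes the wrong way: on a smooth $n$-fold one has $\lct_x(X;D)\le n/\mult_x D$ (coming from the exceptional divisor of the blowup at $x$), while the standard lower bound is only $\lct_x(X;D)\ge 1/\mult_x D$. So the non-klt locus of $(X,(1-\epsilon)D)$ is contained in $\{\mult_x D>1\}$, not in your much smaller set $\{\mult_x D\ge n\}$; your curve argument would then at best force the \emph{lines} through a general point of a non-klt component into $\mathrm{Supp}(D)$, which is far weaker than what you need.

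Second, and more seriously, the reduction you aim for --- that the non-klt locus of $(X,(1-\epsilon)D)$ is zero-dimensional --- is not what one actually has for codimension $r\ge 2$. Suzuki's bound \cite[Proposition~2.1]{Suzuki-cpi} gives $\mult_S D\le 1$ only for subvarieties $S$ of dimension $\ge r$, so a priori the non-klt locus may have dimension up to $r-1$, and your line-sweeping argument (which is essentially in the spirit of that same multiplicity bound) will not do better. The paper's proof therefore does \emph{not} apply Corollary~\ref{cor:lct>1/2} on $X$ directly. Instead it uses Suzuki's bound to confine the non-klt locus to dimension $\le r-1$, then passes to a general linear section $Y\subseteq X$ of codimension $r-1$ through a given point $x$; on $Y$ the non-klt locus becomes finite, and Corollary~\ref{cor:lct>1/2} is applied there with $L=(r-1)H|_Y$. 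The assumption $n\ge 6r$ is exactly what makes the numerical condition $h^0(Y,L)\le\binom{n+r}{r-1}\le (n-r+1)^{n-r+1}/(n-r+1)!$ hold, after which inversion of adjunction pulls the klt conclusion back up to $X$.
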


\begin{proof}
By \cite[Theorem 1.5]{Birkar}, it suffices to show that for every $D\sim_\bQ H$ we have $\lct(X;D)>\frac{1}{2}$. By \cite[Proposition 2.1]{Suzuki-cpi}, we have $\mult_S(D)\le 1$ for every subvariety $S\subseteq X$ of dimension $r$, hence for all $0<\epsilon\ll 1$, the pair $(X,(1-\epsilon)D)$ is klt outside a subset of dimension at most $r-1$ in $X$. Let $x\in X$ be an arbitrary point and let $Y=X\cap V\subseteq\bP^{n+1}$ be a general linear space section containing $x$ of codimension $r-1$. Let $D_Y=D|_Y$ and $L=(r-1)H|_Y$. Then by adjunction $L-(K_Y+(1-\epsilon)D_Y)$ is ample and the pair $(Y,(1-\epsilon)D_Y)$ is klt outside a finite set of points. Since
\[
    h^0(Y,L)\le h^0(\bP^{n+1},\cO_{\bP^{n+1}}(r-1))=\binom{n+r}{r-1}
\]
always holds, we have $\lct(Y;D_Y) > \frac{1}{2}$ by Corollary \ref{cor:lct>1/2} as long as
\begin{equation} \label{eq:h^0(L)-lct}
    \binom{n+r}{r-1} \le \frac{(n-r+1)^{n-r+1}}{(n-r+1)!}.
\end{equation}
Granting this for the moment, then $(Y,\frac{1}{2}D_Y)$ is klt and by inversion of adjunction (see e.g. \cite[Theorem 4.9]{mmp}) $(X,Y+\frac{1}{2}D)$ is plt in a neighbourhood of $Y$. In particular, $(X,\frac{1}{2}D)$ is klt at $x$. Since $x$ is arbitrary, we see that $(X,\frac{1}{2}D)$ is klt.

It remains to prove \eqref{eq:h^0(L)-lct} when $n\ge 6r$. As $\frac{r^r}{r!}<e^r$, we see that $\binom{n+r}{r-1}<\binom{n+r}{r}\le\frac{(n+r)^r}{r!}< e^r (a+1)^r$ where $a=\frac{n}{r}$; on the other hand, $\frac{(n-r+1)^{n-r+1}}{(n-r+1)!}>\frac{(n-r)^{n-r}}{(n-r)!}>2^{n-r}=2^{(a-1)r}$ when $n-r\ge 6$ (we may assume that $r\ge2$ by Corollary \ref{cor:hypersurface lct}), so \eqref{eq:h^0(L)-lct} holds as long as $2^{a-1}\ge e(a+1)$, which is trivial since $a\ge 6$.
\end{proof}

It is not hard to see that one can actually do slightly better if a more precise value of $\sigma_{n,\lambda}$ or $\bar{\sigma}_{n,\lambda}$ is known. For example, we have the following.

\begin{lem} \label{lem:lct-codim2-cpi}
Let $X\subseteq\bP^{n+2}$ be a smooth Fano complete intersection of codimension $2$ and dimension $n\ge 4$. Let $H$ be the hyperplane class. Then $\lct(X;|H|_\bQ)>\frac{1}{2}$.
\end{lem}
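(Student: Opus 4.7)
Following the template of Lemma \ref{lem:lct-cpi}, the plan is to cut $X$ by a single general hyperplane through a chosen point $x \in X$, producing an $(n-1)$-fold $Y$ to which Corollary \ref{cor:lct>1/(1+lambda)} can be applied. The novelty for codimension two is that a sharper combinatorial lower bound on $\bar{\sigma}_{n-1,1}$ is needed than the volume estimate used in the proof of Lemma \ref{lem:lct-cpi}, because the crude inequality $\binom{n+2}{1} \le \tfrac{(n-1)^{n-1}}{(n-1)!}$ fails at $n = 4$.

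More precisely, by \cite[Theorem 1.5]{Birkar} it suffices to show $\lct(X; D) > 1/2$ for each effective $D \sim_\bQ H$. Suzuki's multiplicity bound \cite[Proposition 2.1]{Suzuki-cpi} gives $\mult_S D \le 1$ on every surface $S \subseteq X$, so $(X, (1-\epsilon)D)$ is klt outside a subset of dimension at most one. Fix $x \in X$ and let $V \subseteq \bP^{n+2}$ be a general hyperplane through $x$; set $Y = X \cap V \subseteq \bP^{n+1}$ and $L = H|_Y$. Then $Y$ is a smooth codimension two complete intersection of dimension $n-1$, $(Y,(1-\epsilon)D_Y)$ is klt outside a finite set containing $x$, and writing $-K_X \sim iH$ with $i \ge 1$ the divisor $L - (K_Y + (1-\epsilon)D_Y) = (i - 1 + \epsilon) H|_Y$ is ample; finally the Koszul resolution of $\cO_Y$ in $\bP^{n+1}$ gives $h^0(Y, L) = n + 2$. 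Corollary \ref{cor:lct>1/(1+lambda)} with $\lambda = 1$ therefore reduces the lemma to the strict inequality $n + 2 < \bar{\sigma}_{n-1,1}$, and inversion of adjunction promotes the resulting $\lct(Y; D_Y) > 1/2$ to $\lct(X; D) > 1/2$ at $x$, exactly as in the proof of Lemma \ref{lem:lct-cpi}.

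For the combinatorial bound, write $m = n - 1$. Given $\mathbf{a} = (a_1, \dots, a_m) \in \bR^m_+$ with $\sum a_i \le 1$, ordered so that $a_1 \le \dots \le a_m$, at most one $a_i$ can be $\ge 1/2$, so $2 e_i \in Q_\mathbf{a}$ for $i = 1, \dots, m - 1$; together with $\mathbf{0}, e_1, \dots, e_m$ this produces at least $2m$ lattice points in $Q_\mathbf{a}$, which beats $n + 2 = m + 3$ as soon as $m \ge 4$, that is, $n \ge 5$.

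The genuine obstacle is the remaining case $n = 4$, where the count above yields only $\bar{\sigma}_{3,1} \ge 6$ while the strict inequality $\bar{\sigma}_{3,1} \ge 7$ is needed (note that the volume estimate $\bar{\sigma}_{3,1} > 3^3/3! = 4.5$ used in Corollary \ref{cor:lct>1/2} is by itself too weak). A small case split handles it: if $a_1 < 1/3$ then $3 e_1 \in Q_\mathbf{a}$ supplies a seventh lattice point beyond the six produced above; otherwise $a_1 = 1/3$ forces $\mathbf{a} = (1/3, 1/3, 1/3)$ (using $a_1 \le a_2 \le a_3$ and $\sum a_i \le 1$), and then $Q_\mathbf{a} \cap \bZ^3$ is the set of triples $(x, y, z)$ with $x + y + z \le 2$, which has $\binom{5}{3} = 10$ elements. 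Either way $\bar{\sigma}_{3,1} \ge 7 > 6 = h^0(Y, L)$, completing the proof.
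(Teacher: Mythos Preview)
Your proof is correct and follows exactly the same geometric reduction as the paper: cut by a general hyperplane through $x$, apply Corollary~\ref{cor:lct>1/(1+lambda)} with $\lambda=1$ on the resulting $(n-1)$-fold $Y$, and reduce to the strict inequality $n+2<\bar{\sigma}_{n-1,1}$.

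The only difference is the combinatorial bound you use for $\bar{\sigma}_{m,1}$. You count lattice points along the coordinate axes to get $\bar{\sigma}_{m,1}\ge 2m$, which handles $m\ge 4$, and then treat $m=3$ by a separate case split. The paper instead proves the uniform bound $\bar{\sigma}_{m,1}\ge 2^m-1$ in one line: if $\mathbf{a}\cdot(1,\dots,1)\le 1$ and $\mathbf{v}$ is any vertex of $[0,1]^m$ other than $(1,\dots,1)$, then $\mathbf{a}\cdot\mathbf{v}<\mathbf{a}\cdot(1,\dots,1)\le 1$, so $\mathbf{v}\in Q_\mathbf{a}$. This already gives $\bar{\sigma}_{3,1}\ge 7$ without any case analysis, and more generally dispatches $n+2<2^{n-1}-1$ for all $n\ge 4$ at once. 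Your argument is fine, but the cube-vertex observation is both shorter and stronger; you may want to replace the last two paragraphs with it.
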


\begin{proof}
Taking $r=2$ in \eqref{eq:h^0(L)-lct} and using Lemma \ref{cor:lct>1/(1+lambda)} instead of Corollary \ref{cor:lct>1/2} in the proof of Lemma \ref{lem:lct-cpi}, we see that is suffices to show that $n+2=\binom{n+r}{r-1}<\bar{\sigma}_{n-1,1}$. For $n\ge 4$, this follows from the next lemma.
\end{proof}

\begin{lem}
$\bar{\sigma}_{n,1}\ge 2^n-1$.
\end{lem}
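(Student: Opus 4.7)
The plan is to exhibit $2^n-1$ explicit lattice points in $Q_{\mathbf{a}}\cap\mathbb{Z}^n$ for every admissible $\mathbf{a}$, namely the $0/1$-vectors indexed by proper subsets of $\{1,\ldots,n\}$.

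Fix $\mathbf{a}=(a_1,\ldots,a_n)\in\mathbb{R}^n_+$ with $(1,\ldots,1)\in\overline{Q_{\mathbf{a}}}$; by definition of $\overline{Q_{\mathbf{a}}}$ this means $\sum_{i=1}^n a_i\le 1$. For each proper subset $S\subsetneq\{1,\ldots,n\}$, form the lattice point $\mathbf{e}_S:=\sum_{i\in S}\mathbf{e}_i\in\mathbb{Z}^n_{\ge 0}$. Since every $a_i$ is strictly positive and $S$ omits at least one index,
\[
\mathbf{a}\cdot\mathbf{e}_S=\sum_{i\in S}a_i<\sum_{i=1}^n a_i\le 1,
\]
so $\mathbf{e}_S\in Q_{\mathbf{a}}\cap\mathbb{Z}^n$. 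The $2^n-1$ proper subsets of $\{1,\ldots,n\}$ yield $2^n-1$ distinct such lattice points (including $S=\emptyset$, which gives the origin), so $\#(Q_{\mathbf{a}}\cap\mathbb{Z}^n)\ge 2^n-1$. Taking the minimum over admissible $\mathbf{a}$ gives $\bar{\sigma}_{n,1}\ge 2^n-1$.

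There is no genuine obstacle here; the only thing worth noting is why strict inequality $\mathbf{a}\cdot\mathbf{e}_S<1$ holds — this uses both $a_i>0$ for all $i$ (built into $\mathbf{a}\in\mathbb{R}^n_+$) and $S\neq\{1,\ldots,n\}$, which is exactly why the full subset $S=\{1,\ldots,n\}$ (giving $(1,\ldots,1)$) must be excluded and the bound reads $2^n-1$ rather than $2^n$.
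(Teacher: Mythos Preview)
Your proof is correct and essentially identical to the paper's: the vertices of the unit cube other than $\mathbf{e}=(1,\ldots,1)$ are exactly your vectors $\mathbf{e}_S$ for proper subsets $S$, and the paper uses the same inequality $1\ge \mathbf{a}\cdot\mathbf{e}>\mathbf{a}\cdot\mathbf{e}_S$ to place them in $Q_{\mathbf{a}}$.
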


\begin{proof}
Let $Q_\mathbf{a}$ be such that $\mathbf{e}=(1,\cdots,1)\in \overline{Q_\mathbf{a}}$. It suffices to show that every vertex (other than $\mathbf{e}$) of the unit cube $[0,1]^n$ is contained in $Q_\mathbf{a}$. But if $v$ is such a vertex, then as $\mathbf{a}\in\bR^n_+$ we have $1\ge \mathbf{a}\cdot\mathbf{e}>\mathbf{a}\cdot\mathbf{v}$ as desired.
\end{proof}

\begin{cor} \label{cor:X_2,4}
The smooth complete intersection in $\bP^6$ of a quadric and a quartic not containing a plane is K-stable.
\end{cor}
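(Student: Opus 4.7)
The plan is to deduce K-stability from the general principle of Theorem \ref{thm:superrigidity implies K-stability}, using birational superrigidity together with the strict global lct bound from Lemma \ref{lem:lct-codim2-cpi}.

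First I would record the numerology: $X$ has dimension $n=4$, codimension $r=2$, and by adjunction its index is $7-2-4=1$, so $-K_X\sim H$ where $H$ is the hyperplane class. By the Lefschetz hyperplane theorem, $X$ has Picard number one and is smooth Fano. Since $n=4\ge 4$, Lemma \ref{lem:lct-codim2-cpi} applies directly and yields the strict bound $\lct(X;|H|_\bQ)>\tfrac{1}{2}$, i.e. $\lct(X;D)>\tfrac{1}{2}$ for every effective $D\sim_\bQ -K_X$. This supplies one of the two inputs of Theorem \ref{thm:superrigidity implies K-stability}, and moreover with the strict inequality needed for K-stability rather than merely K-semistability.

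The second input is the birational superrigidity of $X=X_{2,4}\subseteq\bP^6$ not containing a plane, which I would invoke as a black box from Pukhlikov's work on Fano complete intersections of index one (e.g.\ \cite{P-superrigid-cpi-1,P-superrigid-cpi-2}). The ``no plane'' hypothesis is exactly what excludes the configuration producing a maximal singularity: without it, a $2$-plane $\Pi\subset X$ would give a movable boundary $M\sim_\bQ -K_X$ for which $(X,M)$ fails to be canonical along $\Pi$, obstructing the Noether--Fano inequality. Granted this citation, every movable boundary $M\sim_\bQ -K_X$ produces a pair $(X,M)$ with canonical singularities.

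With both hypotheses of Theorem \ref{thm:superrigidity implies K-stability} verified with strict inequality, the conclusion that $X$ is K-stable (and hence admits a K\"ahler--Einstein metric by Yau--Tian--Donaldson) is immediate. The main obstacle is entirely concentrated in the cited birational superrigidity, which is precisely why the ``no plane'' hypothesis appears in the statement; beyond invoking it, the argument is bookkeeping built from Lemma \ref{lem:lct-codim2-cpi} and Theorem \ref{thm:superrigidity implies K-stability}.
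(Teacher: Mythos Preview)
Your approach is essentially the same as the paper's: combine birational superrigidity with Lemma \ref{lem:lct-codim2-cpi} and feed both into Theorem \ref{thm:superrigidity implies K-stability}. The only issue is the citation for birational superrigidity. The references \cite{P-superrigid-cpi-1,P-superrigid-cpi-2} only establish birational superrigidity for a \emph{general} complete intersection (and in large dimension), so they do not literally cover every smooth $X_{2,4}\subseteq\bP^6$ not containing a plane; the paper instead invokes the specific result \cite{quadric-quartic}, which is exactly the statement that such $X_{2,4}$ are birationally superrigid. With that citation corrected, your argument coincides with the paper's.
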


\begin{proof}
By \cite{quadric-quartic}, such varieties are birationally superrigid, so the result follows from Theorem \ref{thm:superrigidity implies K-stability} and Lemma \ref{lem:lct-codim2-cpi}.
\end{proof}

Using the same strategy, we also prove the birational superrigidity of Fano complete intersections in large dimension.

\begin{lem} \label{lem:non-canonical to non-lc}
Let $(X,D)$ be a pair and $x\in X$. Assume that $X$ is smooth, $(X,D)$ has canonical singularities outside a subset of codimension at least $m+1$ in $X$, but is not canonical at $x$. Let $V\subseteq X$ be a general complete intersection subvariety of dimension $m$ containing $x$. Then $(V,D|_V)$ is not log canonical at $x$.
\end{lem}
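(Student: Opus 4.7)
The plan is to cut $X$ with $n-m$ general hyperplanes through $x$ one at a time, tracking how the discrepancy of a fixed ``bad'' divisor evolves. Since $(X,D)$ is not canonical at $x$, pick a prime divisor $E$ over $X$ with $c_X(E)\ni x$ and discrepancy $a:=a(E;X,D)<0$; by hypothesis $d:=\dim c_X(E)\le n-m-1$. Fix general hyperplanes $H_1,\dots,H_{n-m}$ through $x$ and set $V_i:=H_1\cap\cdots\cap H_i$, so that $V_0=X$ and $V_{n-m}=V$.

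The key computation for a single cut is the following. Given a prime divisor $E'$ over $V_i$ with $c_{V_i}(E')\ni x$, take a log resolution $\pi\colon Y\to V_i$ realising $E'$, let $H=H_{i+1}|_{V_i}$, and use
\[ K_Y+\widetilde H=\pi^*(K_{V_i}+H)+\sum_j(a_j-c_j)E_j,\qquad c_j:=\nu_{E_j}(H). \]
Restricting to $\widetilde H$ via adjunction shows that any component $F$ of $E'\cap\widetilde H$ with $c_{V_{i+1}}(F)\ni x$ satisfies
\[ a(F;V_{i+1},D|_{V_{i+1}})=a(E';V_i,D|_{V_i})-\nu_{E'}(H). \]
There are two cases: \emph{(i)} if $\dim c_{V_i}(E')\ge 1$, a general $H$ does not contain $c_{V_i}(E')$ and $\nu_{E'}(H)=0$, so the discrepancy is preserved while $\dim c_{V_{i+1}}(F)=\dim c_{V_i}(E')-1$; \emph{(ii)} if $c_{V_i}(E')=\{x\}$, then $\nu_{E'}(H)\ge\nu_{E'}(\fm_x)\ge 1$, so the discrepancy drops by at least one and the center remains $\{x\}$.

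Iterating from $E^{(0)}:=E$, the first $d$ cuts fall in case (i), keeping the discrepancy at $a<0$ while shrinking the center to $\{x\}$; the remaining $n-m-d\ge 1$ cuts fall in case (ii), each lowering the discrepancy by at least one. Hence some divisor $F_V$ over $V$ centered at $x$ satisfies
\[ a(F_V;V,D|_V)\le a-(n-m-d)<-(n-m-d)\le -1, \]
proving $(V,D|_V)$ is not log canonical at $x$. The main technical obstacle is the Bertini-style bookkeeping at each cut: one must exhibit an irreducible component $F$ of $E'\cap\widetilde H$ whose center still contains $x$, and must ensure that the successive log resolutions are compatible---standard issues in the smooth setting but requiring some care in the iteration.
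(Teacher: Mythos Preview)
Your proof is correct and follows essentially the same strategy as the paper: cut by $n-m$ general hyperplanes through $x$ one at a time, noting that while the center of a bad divisor is positive-dimensional the discrepancy is preserved, and once the center is the point $x$ each further cut drops the discrepancy by at least one. The paper phrases this more tersely---after at most $n-m-1$ cuts the non-canonical locus is isolated at $x$, one more cut makes the pair non-lc (the standard isolated-non-canonical $\Rightarrow$ non-lc-after-cut fact), and further cuts preserve non-lc by inversion of adjunction---whereas you make the discrepancy calculation explicit by tracking a single divisor throughout; the underlying mechanism is the same.
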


\begin{proof} 
This is well known to experts but we include a proof for lack of a suitable reference. Let $Y\subseteq X$ be a general complete intersection subvariety of dimension $m+1$ and let $\Delta=D|_Y$. By \cite[Theorem 4.9(3)]{mmp} and our assumption, $(Y,\Delta)$ does not have canonical singularity at $x$ and there exists a prime divisor $E$ over $Y$ centered at $x$ such that $a(E;Y,\Delta)<0$. Let $V\subseteq Y$ be a general hypersurface containing $x$. Note that $\dim V=m$. Then $a(E;Y,\Delta+V) =  a(E;Y,\Delta)-\ord_E(V)<-1$ and in particular $(Y,\Delta+V)$ is not lc at $x$. By inversion of adjunction, $(V,\Delta|_V)$ is not lc at $x$ either.
\end{proof}

\begin{proof}[Proof of Theorem \ref{thm:superrigidity}]
Let $M\sim_\bQ -K_X$ be a movable boundary on $X$. We need to show that $(X,M)$ has canonical singularities. For this it suffices to consider movable boundaries of the form $M=a\cM$ where $\dim|\cM|=1$. With such $M$ we associate a codimension two cycle $M^2$ on $X$ by setting $M^2:=a^2(D_1\cdot D_2)$ where $D_1,D_2$ are two general members of $\cM$. Note that, by \cite[Proposition 2.1]{Suzuki-cpi}, we have $\mult_S(M^2)\le 1$ for every subvariety $S\subseteq X$ of dimension at least $2r$ (here and in what follows the multiplicities along subvarieties are taken in the sense of \cite[Section 4.3]{Fulton}); in other words, there exists a subset $Z\subseteq X$ of dimension at most $2r-1$ such that $\mult_x(M^2)\le 1$ for all $x\not\in Z$. Let $x\in X\backslash Z$ and let $S$ be a general surface section of $X$ containing $x$. By \cite[Theorem 0.1]{dFEM-mult-and-lct}, $(S,2M|_S)$ is lc at $x$ (note that as $M^2|_S$ is a complete intersection $0$-dimensional subscheme, its multiplicity at $x$ is the same as the Hilbert-Samuel multiplicity of its defining ideals), hence by inversion of adjunction, $(X,2M)$ is lc at $x$ as well. It follows that for all $0<\epsilon\ll 1$, the pair $(X,2(1-\epsilon)M)$ is klt outside $Z$. Let $x\in X$ be any point and let $Y\subseteq X$ be cut out by a general linear subspace $V\subseteq\bP^{n+r}$ of codimension $2r-1$ containing $x$. Then $Y\subseteq\bP^{n-r+1}$ is also a codimension $r$ complete intersection and we have $K_Y\sim 2(r-1)H$ where $H$ is the restriction of the hyperplane class. Let $D=2M|_Y$ and $L=2rH\sim_\bQ K_Y+D$. Since $V$ is general and $\dim Z \le 2r-1$, $(Y,(1-\epsilon)D)$ is klt outside a finite set of points (i.e. those in $V\cap Z$). Similar to the proof of Lemma \ref{lem:lct-cpi}, by Corollary \ref{cor:lct>1/2} we have $\lct(Y;D)>\frac{1}{2}$ as long as
\begin{equation} \label{eq:h^0(L)-rigidity}
    h^0(Y,L)\le h^0(\bP^{n-r+1},\cO_{\bP^{n-r+1}}(2r))=\binom{n+r+1}{2r}<\frac{(n-2r+1)^{n-2r+1}}{(n-2r+1)!}.
\end{equation}
Assuming this inequality for the moment, then $(Y,M|_Y)=(Y,\frac{1}{2}D)$ is klt. On the other hand by \cite[Proposition 2.1]{Suzuki-cpi}, we have $\mult_S(M)\le 1$ for every subvariety $S\subseteq X$ of dimension at least $r$, so $(X,M)$ is canonical outside a subset of dimension at most $r-1$ in $X$ by \cite[(3.14.1)]{Kol-sing-of-pairs}. Suppose that $(X,M)$ is not canonical at $x$. Then since $r-1<2r-1=\mathrm{codim}_X Y$, $(Y,M|_Y)$ is not lc by Lemma \ref{lem:non-canonical to non-lc}, which is a contradiction. Hence $(X,M)$ is canonical and we are done.

It remains to prove \eqref{eq:h^0(L)-rigidity} when $n\ge 10r$. Let $m=n-2r+1$; note that $m>8r$. As in the proof of Lemma \ref{lem:lct-cpi}, it is easy to see that \eqref{eq:h^0(L)-rigidity} is implied by the following weaker inequality
\[2^m\ge \left(\frac{e(m+3r)}{2r}\right)^{2r},
\]
or equivalently, $2^a\ge \frac{e^2}{4}(a+3)^2$ where $a=\frac{m}{r}$. This last inequality is obviously satisfied as $a>8$.
\end{proof}

\section{A criterion for K-stability} \label{sec:criterion}

In this section we give the proof of Theorem \ref{thm:criterion}.

\begin{defn} \label{defn:movable threshold}
Let $X$ be an $n$-dimensional variety and $L$ an ample divisor on $X$. Let $F$ be a prime divisor over $X$. The \emph{movable threshold} $\eta(L,F)$ (or simply $\eta(F)$) of $L$ with respect to $F$ is defined as the supremum of all $\eta>0$ such that every divisor in the stable base locus of $\pi^*L-\eta F$ is exceptional over $X$.
\end{defn}

Note that if $F$ is a dreamy divisor, then the supremum is indeed a maximum in the above definition.

\begin{lem} \label{lem:beta-inequality}
With notation as in Definitions \ref{defn:threshold and beta} and \ref{defn:movable threshold} and assuming that $X$ is $\bQ$-factorial and $\rho(X)=1$, we have the inequality
\[\frac{1}{(L^n)}\int_0^\infty \vol_X(L-xF)\mathrm{d}x \le \frac{1}{n+1}\tau(F)+\frac{n-1}{n+1}\eta(F).
\]
\end{lem}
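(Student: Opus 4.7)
The plan is to split the integral $\int_0^{\tau}\vol(L-xF)\,dx$ at $\eta:=\eta(F)$ (with $\tau:=\tau(F)$, noting the integrand vanishes outside $[0,\tau]$) and analyze each piece by means of the divisorial Zariski decomposition $\pi^{*}L-xF=P_{x}+N_{x}$ together with the hypothesis $\rho(X)=1$.

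By the definition of $\eta$, the negative part $N_{x}$ is $\pi$-exceptional for $x\in[0,\eta]$ and acquires a non-exceptional component for $x\in(\eta,\tau]$. Using $\rho(X)=1$, every effective class on $X$ is a non-negative multiple of $L$, so one may write $\pi_{*}P_{x}\equiv b_{x}L$ for a non-negative function $b_{x}$: explicitly $b_{x}=1-xc$ on $[0,\eta]$, where $\pi_{*}F\equiv cL$ with $c=0$ if $F$ is exceptional, and $b_{x}=1-xc-a_{x}$ on $[\eta,\tau]$ for some $a_{x}\ge 0$ with $a_{\eta}=0$ and $b_{\tau}=0$. The pushforward inclusion $\pi_{*}\cO_{Y}(mP_{x})\subseteq\cO_{X}(m\pi_{*}P_{x})$ of global sections then yields the pointwise estimate
\[
\vol(L-xF)\;=\;\vol_{Y}(P_{x})\;\le\;\vol_{X}(\pi_{*}P_{x})\;=\;b_{x}^{n}\,(L^{n}),
\]
which reduces the problem to showing $\int_{0}^{\tau}b_{x}^{n}\,dx\le(\tau+(n-1)\eta)/(n+1)$. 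On $[0,\eta]$ the integrand is the explicit polynomial $(1-xc)^{n}$; on $[\eta,\tau]$ one combines the boundary conditions $b_{\eta}=1-\eta c$, $b_{\tau}=0$ with the concavity of $\vol(L-xF)^{1/n}$ (which constrains the shape of $b_{x}$ through Khovanskii--Teissier), together with $\tau c\le 1$ (which follows from $b_{\tau}=0$), to produce the required decay estimate.

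The main obstacle is in the final combination of the two sub-integrals. The naive bounds $\vol\le(L^{n})$ on $[0,\eta]$ and $\vol(L-xF)\le(L^{n})\bigl((\tau-x)/(\tau-\eta)\bigr)^{n}$ on $[\eta,\tau]$ already produce $(n\eta+\tau)/(n+1)$, but this is weaker than the asserted $((n-1)\eta+\tau)/(n+1)$ by $\eta/(n+1)$. To extract this last saving one must exploit the concavity of $\vol^{1/n}$ \emph{globally} on $[0,\tau]$ rather than piecewise on each subinterval: the shape of $b_{x}$ on $[\eta,\tau]$ and on $[0,\eta]$ must be interlocked through the joint boundary values $b_{0}=1$, $b_{\tau}=0$, and the extremal profile (attained when $\vol(L-xF)=(L^{n})(1-(x/\tau)^{n})$ in the case $\eta=\tau$) determines the sharp coefficient. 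I expect this interlocking, rather than the general setup, to require the bulk of the technical work.
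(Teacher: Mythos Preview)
Your proposal has a genuine gap: the constraints you marshal—$\vol\le b_x^n(L^n)$ via pushforward, together with concavity of $\vol(L-xF)^{1/n}$—are not strong enough to yield the coefficient $\frac{n-1}{n+1}$, and no ``global interlocking'' of these two facts will close the shortfall $\eta/(n+1)$. Concretely, take $n=2$, $F$ exceptional, and consider the abstract profile $\vol(x)=(L^2)$ on $[0,\eta]$ and $\vol(x)=(L^2)\bigl(\tfrac{\tau-x}{\tau-\eta}\bigr)^2$ on $[\eta,\tau]$. Then $\vol^{1/2}$ is concave, your $b_x$ behaves exactly as you describe (with $c=0$, $a_x=(x-\eta)/(\tau-\eta)$ on $[\eta,\tau]$), yet $\tfrac{1}{(L^2)}\int_0^\tau\vol=\tfrac{2\eta+\tau}{3}$, which strictly exceeds the target $\tfrac{\eta+\tau}{3}$ whenever $\eta>0$. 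So the desired inequality can fail for profiles satisfying every hypothesis you invoke; an additional geometric input is required, and your last paragraph does not identify it.

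The paper supplies that input by passing to the \emph{restricted} volume $V_x=\vol_{Y|F}(\pi^*L-xF)$ and the center-of-mass identity $\int_0^\tau(x-b)V_x\,\mathrm{d}x=0$ (this is Fujita's integration by parts). Two facts then combine. First, log-concavity of $V_x$ from \cite{ELMNP-restricted-volume}, used in the form $(x-x_0)V_x\le(x-x_0)(x/x_0)^{n-1}V_{x_0}$; this is strictly stronger than concavity of $\vol^{1/n}$ and already rules out the profile above (there $V_x$ would vanish on $[0,\eta)$ and jump at $\eta$). Second—and this is where $\rho(X)=1$ really enters—there is a \emph{unique} irreducible $D\sim_\bQ L$ with $\ord_F(D)>\eta$, so necessarily $\ord_F(D)=\tau$, and subtracting off this fixed part gives the \emph{equality} $V_x=\bigl(\tfrac{\tau-x}{\tau-\eta}\bigr)^{n-1}V_\eta$ on $[\eta,\tau]$. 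Plugging these two facts into the center-of-mass identity and evaluating the resulting elementary integrals yields exactly $b\le\tfrac{\tau+(n-1)\eta}{n+1}$. Your Zariski-decomposition framework never accesses either the restricted volume or this equality on $[\eta,\tau]$, which is why it stalls one step short.
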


\begin{rem}
Without the Picard number one assumption, Fujita in \cite[Proposition 2.1]{Fujita-plt-blowup} proves a weaker inequality where the right-hand side becomes $\frac{n}{n+1}\tau(F)$.
\end{rem}

\begin{proof}
The argument is a refinement of the proof of \cite[Proposition 2.1]{Fujita-plt-blowup} and \cite[Theorem 1.2]{rigid-imply-stable}, so we only indicate the difference. For ease of notation, let $\eta=\eta(F)$ and $\tau=\tau(F)$. Let $\pi:Y\to X$ be a projective birational morphism such that $F$ is a prime divisor on $Y$. Let 
\[b=\frac{1}{(L^n)}\int_0^\infty \vol_X(L-xF) \mathrm{d}x.
\]
As in the proof of \cite[Proposition 2.1]{Fujita-plt-blowup}, we have
\begin{equation} \label{eq:b}
    \int_0^{\tau} (x-b)\cdot \vol_{Y|F}(\pi^*L-xF) \mathrm{d}x = 0
\end{equation}
where $\vol_{Y|F}$ denotes the restricted volume of a divisor to $F$ (see \cite{ELMNP-restricted-volume}). For simplicity, we let $V_t=\vol_{Y|F}(\pi^*L-tF)$. It is clear (as in \cite[Proposition 2.1]{Fujita-plt-blowup}) that $F$ is not contained in the augmented base locus $\mathbf{B}_+(\pi^*L-xF)$ when $0\le x <\tau$. So by the log concavity property in \cite[Theorem A]{ELMNP-restricted-volume}, which holds for the restricted volume $\vol_{Y|F}(\pi^*L-xF)$ when $0\le x <\tau$, we have
\begin{equation} \label{eq:log-concavity}
    (x-x_0)\cdot V_x\le (x-x_0)\left(\frac{x}{x_0}\right)^{n-1}V_{x_0}
\end{equation}
for every $0\le x,x_0 \le \tau$. We may assume that $\eta<\tau$; otherwise, the lemma simply follows from \cite[Proposition 2.1]{Fujita-plt-blowup}. By the definition of pseudo-effective threshold, there exists an effective divisor $D\sim_\bQ -K_X$ such that $\ord_F(D)>\eta$. Since $X$ is $\bQ$-factorial and $\rho(X)=1$, we may assume that $D$ is irreducible. Such $D$ is necessarily unique by the definition of $\eta(F)$. In particular, there are no other effective divisors $D'\sim_\bQ -K_X$ with $\ord_F(D')>\ord_F(D)$ and hence $\ord_F(D)=\tau$. Moreover, if $D'\sim_\bQ -K_X$ is such that $\eta \le \ord_F(D')\le \tau$ and we write $D'=aD+M$ where $D\not\subseteq\Supp(M)$, then $\ord_F(M)\le\eta$. Let $\eta\le x \le\tau$. As \[\pi^*L-xF=\frac{\tau-x}{\tau-\eta}(\pi^*L-\eta F)+\frac{x-\eta}{\tau-\eta}(\pi^*L-\tau F),
\]
we see that $D$ appears in the stable base locus of $\pi^*L-xF$ with multiplicity $\ge \frac{x-\eta}{\tau-\eta}$ and we have the equality of restricted volumes
\begin{equation} \label{eq:restricted volume}
    V_x=\left(\frac{\tau-x}{\tau-\eta}\right)^{n-1}V_\eta.
\end{equation}
Now suppose first that $b\ge\eta$. Combining \eqref{eq:b}, \eqref{eq:log-concavity} (with $x_0=\eta$) and \eqref{eq:restricted volume} we have
\[0\le \int_0^\eta (x-b)\left(\frac{x}{\eta}\right)^{n-1}V_\eta \mathrm{d}x + \int_\eta^\tau (x-b)\left(\frac{\tau-x}{\tau-\eta}\right)^{n-1}V_\eta \mathrm{d}x,
\]
which reduces to $b\le \frac{1}{n+1}\tau+\frac{n-1}{n+1}\eta$. Suppose on the other hand that $b<\eta$. Then combining \eqref{eq:log-concavity} (with $x_0=b$ and $x=\eta$) and \eqref{eq:restricted volume} we have
\[V_x\le \left(\frac{\eta}{b}\right)^{n-1} \left(\frac{\tau-x}{\tau-\eta}\right)^{n-1}V_b
\]
when $\eta\le x \le\tau$. Combining this with \eqref{eq:log-concavity} (with $x_0=b$ again) and \eqref{eq:b} we have
\[0\le \int_0^\eta (x-b)\left(\frac{x}{b}\right)^{n-1}V_b \mathrm{d}x + \int_\eta^\tau (x-b)\left(\frac{\eta}{b}\right)^{n-1}\left(\frac{\tau-x}{\tau-\eta}\right)^{n-1}V_b \mathrm{d}x,
\]
which again reduces to $b\le \frac{1}{n+1}\tau+\frac{n-1}{n+1}\eta$. This proves the lemma.
\end{proof}

Comparing with the expression of $\beta(F)$ we immediately obtain the following.

\begin{cor} \label{cor:beta>0}
Assume that $\frac{1}{n+1}\tau(F)+\frac{n-1}{n+1}\eta(F)\le A_X(F)$ $($resp. $<A_X(F))$. Then $\beta(F)\ge 0$ $($resp. $>0)$. \qed
\end{cor}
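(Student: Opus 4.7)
The plan is to deduce the corollary directly from Lemma \ref{lem:beta-inequality} by rearranging the defining expression for $\beta(F)$. Recall from Definition \ref{defn:threshold and beta}(3) that
\[
\beta(F) = A_X(F)\cdot(L^n) - \int_0^\infty \vol_X(L-xF)\,\mathrm{d}x.
\]
First I would rewrite this as
\[
\beta(F) = (L^n)\left(A_X(F) - \frac{1}{(L^n)}\int_0^\infty \vol_X(L-xF)\,\mathrm{d}x\right),
\]
which is valid because $L$ is ample and hence $(L^n)>0$. Then I would substitute the upper bound provided by Lemma \ref{lem:beta-inequality}, obtaining
\[
\beta(F) \ge (L^n)\left(A_X(F) - \frac{1}{n+1}\tau(F) - \frac{n-1}{n+1}\eta(F)\right).
\]
The hypothesis $\tfrac{1}{n+1}\tau(F)+\tfrac{n-1}{n+1}\eta(F) \le A_X(F)$ makes the parenthesized expression nonnegative, which forces $\beta(F)\ge 0$; strict inequality in the hypothesis gives strict inequality of the parenthesized expression, hence $\beta(F)>0$. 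Since there are no other cases to consider, this completes the proof.

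There is essentially no obstacle here: the only nontrivial input is Lemma \ref{lem:beta-inequality} itself, which has already been established. One minor point worth checking is that the hypotheses of Lemma \ref{lem:beta-inequality}, namely $X$ being $\bQ$-factorial with $\rho(X)=1$, match the setting in which Corollary \ref{cor:beta>0} is intended to be applied (to $\bQ$-Fano varieties of Picard number one with $L=-K_X$), so the application is legitimate in the ambient context of Theorem \ref{thm:criterion}.
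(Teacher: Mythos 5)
Your proposal is correct and is exactly the paper's argument: the corollary is stated as an immediate consequence of Lemma \ref{lem:beta-inequality} by comparison with the defining expression for $\beta(F)$, which is precisely the substitution you carry out. Your remark about checking the $\bQ$-factorial, Picard number one hypotheses is a sensible sanity check and matches the intended setting.
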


\begin{proof}[Proof of Theorem \ref{thm:criterion}]
Let $F$ be a dreamy divisor over $X$, let $\eta=\eta(F)$, and let $\tau=\tau(F)$. Then for $m\gg0$, the linear system $|-mK_X-m\tau F|$ (i.e., the sublinear system of $|-mK_X|$ consisting of divisors that vanish with order at least $m\tau$ along $F$) is nonempty, whereas $|-mK_X-m\eta F|$ is movable. Let $D\in|-mK_X-m\tau F|$ and $M=\frac{1}{m}|-mK_X-m\eta F|$. Then $M$ is a movable boundary, $D\sim_\bQ M\sim_\bQ -K_X$, $\ord_F(D)=\tau$ and $\ord_F(M)=\eta$. Thus if $(X,\frac{1}{n+1}D+\frac{n-1}{n+1}M)$ is lc (resp. klt), then we have $\frac{1}{n+1}\tau+\frac{n-1}{n+1}\eta\le A_X(F)$ (resp. $<A_X(F)$). As this holds for every dreamy divisor $F$, $X$ is K-semistable (resp. K-stable) by Theorem \ref{thm:beta-criterion} and Corollary \ref{cor:beta>0}.
\end{proof}

\section{Intersection of quadric and cubic} \label{sec:X_2,3}

In this section, we make a more delicate use of Theorem \ref{thm:lct-estimate} to prove the K-stability of $X_{2,3}\subseteq \bP^5$ (based on the criterion given by Theorem \ref{thm:criterion}). Again, we start with some lower bound of $\sigma_{n,\lambda}$ and $\bar{\sigma}_{n,\lambda}$. In the surface case, these numbers can be approximated quite precisely using Pick's theorem. 

\begin{lem} \label{lem:sigma_2,m}
Let $m\in\bZ_+$. Then $\sigma_{2,m}\ge \frac{1}{2}(4m^2+3m+3)$.
\end{lem}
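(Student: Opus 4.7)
The plan is to construct a lattice polygon inside $Q_{\mathbf a}$ with enough lattice points to meet the bound, and then apply Pick's theorem. Fix $\mathbf a=(a_1,a_2)\in\mathbb R_+^2$ with $(m,m)\in Q_{\mathbf a}$, and set $A:=1/a_1$, $B:=1/a_2$, so that $A,B>m$ and $1/A+1/B<1/m$. Take the integers $A_0:=\lceil A\rceil-1$ and $B_0:=\lceil B\rceil-1$: both are $\geq m$ (because $A,B>m$), and both $(A_0,0),(0,B_0)$ lie in $Q_{\mathbf a}$. By convexity, the lattice polygon
$$P:=\operatorname{conv}\bigl\{(0,0),\,(A_0,0),\,(m,m),\,(0,B_0)\bigr\}\subseteq Q_{\mathbf a},$$
so it suffices to lower-bound $\#(P\cap\mathbb Z^2)$.

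The first main step is the Pick computation. In the generic case $(m,m)$ is an extreme vertex of $P$ (equivalently $1/A_0+1/B_0>1/m$), making $P$ a lattice quadrilateral. The shoelace formula yields $\operatorname{Area}(P)=m(A_0+B_0)/2$ and an edge-by-edge $\gcd$-count yields the boundary total $A_0+B_0+\gcd(A_0-m,m)+\gcd(B_0-m,m)$. Pick's theorem then gives
$$\#(P\cap\mathbb Z^2)=\frac{(m+1)(A_0+B_0)}{2}+\frac{\gcd(A_0-m,m)+\gcd(B_0-m,m)}{2}+1.$$
In the degenerate situation where $(m,m)$ lies on or below the segment from $(A_0,0)$ to $(0,B_0)$, the polygon $P$ collapses to a triangle, and the analogous Pick computation uses the resulting inequality $A_0B_0\geq m(A_0+B_0)$ to yield a strictly better estimate; this case therefore poses no obstacle.

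The second main step is the lower bound $A_0+B_0\geq 4m-1$. The AM--HM inequality applied to $1/A+1/B<1/m$ gives $A+B>4m$ strictly; combined with $A_0\geq A-1$ and $B_0\geq B-1$, we get $A_0+B_0>4m-2$, and since $A_0+B_0$ is an integer, this forces $A_0+B_0\geq 4m-1$. Together with $\gcd(\,\cdot\,,m)\geq 1$, this gives
$$\#(Q_{\mathbf a}\cap\mathbb Z^2)\geq \#(P\cap\mathbb Z^2)\geq \frac{(m+1)(4m-1)}{2}+1+1=\frac{4m^2+3m+3}{2},$$
which is the claimed bound on $\sigma_{2,m}$. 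The main subtlety is precisely this one-unit integrality sharpening: the continuous bound $A+B>4m$ is only qualitative, and using integrality to upgrade it to $A_0+B_0\geq 4m-1$ is exactly what makes Pick's estimate hit the target value $(4m^2+3m+3)/2$. The secondary obstacle is bookkeeping the quadrilateral/triangle dichotomy for $P$, but both cases are handled uniformly by the Pick computation above.
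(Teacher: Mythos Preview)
Your proof is correct and follows essentially the same approach as the paper's: both construct the lattice polygon with vertices $(0,0)$, $(u,0)$, $(m,m)$, $(0,v)$ inside $Q_{\mathbf a}$ (your $A_0,B_0$ are the paper's $u,v$), apply Pick's theorem, and use the integrality sharpening $u+v\ge 4m-1$ from the strict AM--HM inequality $s+t>4m$. The only cosmetic differences are that you compute the boundary term exactly via gcd's (the paper just uses $b\ge u+v+2$) and you explicitly split off the degenerate triangle case, whereas the paper sidesteps this by applying Pick to the possibly non-convex simple quadrilateral, whose shoelace area is still $\tfrac12 m(u+v)$.
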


\begin{proof}
Let $\mathbf{a}=(\frac{1}{s},\frac{1}{t})$ be such that $(m,m)\in Q=Q_\mathbf{a}$. Then we have $\frac{1}{s}+\frac{1}{t}<\frac{1}{m}$ and $s+t>4m$. We may slightly decrease $s,t$ and assume that $s,t\not\in\bZ$. Let $u=\lfloor s \rfloor$ and let $v=\lfloor t \rfloor$. Then $u+v\ge 4m-1$. Now consider the polygon $P$ given by the following vertices: $(0,0)$, $(u,0)$, $(m,m)$ and $(0,v)$. Clearly $P\subseteq Q$, so it suffices to prove
\begin{equation} \label{eq:P cap Z^2}
    \#(P\cap\bZ^2)\ge \frac{1}{2}(4m^2+3m+3).
\end{equation}
On the other hand, by Pick's theorem, we have $i+\frac{1}{2} b=A+1$ where $i=\#(P^\circ\cap\bZ^2)$, $b=\#(\partial P\cap\bZ^2)\ge u+v+2$ and $A=\mathrm{Area}(P)=\frac{1}{2}m(u+v)$, hence 
\[\#(P\cap\bZ^2)=i+b=A+\frac{1}{2}b+1\ge \frac{1}{2}(m+1)(u+v)+2 \ge \frac{1}{2}(m+1)(4m-1)+2,
\]
which gives \eqref{eq:P cap Z^2} and we are done.
\end{proof}

\begin{lem} \label{lem:sigma-bar_2,m}
Let $m\in\bZ_+$. Then $\bar{\sigma}_{2,m}\ge m(2m+1)$.
\end{lem}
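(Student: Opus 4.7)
The plan is to follow the template of the proof of Lemma \ref{lem:sigma_2,m}: apply Pick's theorem to a suitable lattice polygon inscribed in the closure $\overline{Q_\mathbf{a}}$, with modifications to absorb both the weakened closed constraint and the possible boundary lattice point $(m, m)$.

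First I rewrite the hypothesis. For $\mathbf{a} = (1/s, 1/t)$, the condition $(m, m) \in \overline{Q_\mathbf{a}}$ amounts to $1/s + 1/t \le 1/m$, equivalently $(s-m)(t-m) \ge m^2$ with $s, t > m$; together with AM--GM this yields $s + t \ge 4m$. Let $u = \lceil s \rceil - 1$ and $v = \lceil t \rceil - 1$, so that $u, v \ge m$, $u < s$, $v < t$ (hence $(u, 0), (0, v) \in Q_\mathbf{a}$), and $u + v \ge s + t - 2 \ge 4m - 2$.

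Next, form the lattice polygon $P$ with vertices $(0, 0), (u, 0), (m, m), (0, v)$, assuming the generic case $m(u + v) > uv$ in which $P$ is a proper convex quadrilateral. (The degenerate case $m(u+v) \le uv$ is easier: then $(m, m)$ already lies in the triangle $T_0 = \mathrm{conv}\{(0, 0), (u, 0), (0, v)\}$, and $T_0 \subseteq Q_\mathbf{a}$ strictly, so Pick's theorem applied to $T_0$ suffices with no correction.) Each vertex of $P$ satisfies $x/s + y/t \le 1$ and this linear function attains its maximum on $P$ at $(m, m)$ with value $\le 1$, so $P \subseteq \overline{Q_\mathbf{a}}$ and the only lattice point of $P$ that can fail to lie in $Q_\mathbf{a}$ is $(m, m)$ itself. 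Consequently
\[
\#(Q_\mathbf{a} \cap \bZ^2) \;\ge\; \#(P \cap \bZ^2) - 1.
\]
This $-1$ correction is the only substantive new feature compared with Lemma \ref{lem:sigma_2,m}.

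Finally, the shoelace formula gives $\mathrm{Area}(P) = m(u + v)/2$, and the two axis-edges together with the vertex $(m, m)$ contribute at least $u + v + 2$ boundary lattice points, so Pick's theorem yields
\[
\#(P \cap \bZ^2) \;=\; \mathrm{Area}(P) + \tfrac{1}{2}b + 1 \;\ge\; \tfrac{1}{2}(m+1)(u+v) + 2 \;\ge\; \tfrac{1}{2}(m+1)(4m-2) + 2 \;=\; 2m^2 + m + 1,
\]
which combined with the previous inequality gives $\#(Q_\mathbf{a} \cap \bZ^2) \ge m(2m+1)$. The main difficulty is that the estimate has to be exact to the integer: the $-1$ from the possibly excluded $(m, m)$, the $+1$ from Pick's theorem, and the bound $u + v \ge 4m - 2$ (which is $1$ smaller than the corresponding bound in Lemma \ref{lem:sigma_2,m}) combine to land precisely on $m(2m+1)$. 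The tight example $s = t = 2m$ --- for which $u = v = 2m-1$ and $(m, m)$ lies on the hypotenuse of $Q_\mathbf{a}$ --- shows that the bound cannot be improved.
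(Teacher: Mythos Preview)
Your proof is correct and follows essentially the same approach as the paper's own argument: define $u,v$ as the largest integers strictly below $s,t$ (the paper writes $u=\lfloor s-\epsilon\rfloor$, $v=\lfloor t-\epsilon\rfloor$, which coincides with your $\lceil\cdot\rceil-1$), form the lattice polygon $P$ on $(0,0),(u,0),(m,m),(0,v)$, apply Pick with $A=\tfrac{1}{2}m(u+v)$ and $b\ge u+v+2$, and then subtract one for the possible loss of $(m,m)$. You are slightly more explicit than the paper in separating the convex and degenerate configurations and in justifying that $(m,m)$ is the unique potential boundary lattice point, but the substance is identical.
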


\begin{proof}
Let $s,t,Q$ be as in the proof of Lemma \ref{lem:sigma_2,m}. It suffices to show that $\#(Q\cap\bZ^2)\ge m(2m+1)$. We have $\frac{1}{s}+\frac{1}{t}\le\frac{1}{m}$ and thus $s+t\ge 4m$. Let $0<\epsilon\ll 1$ and let $u=\lfloor s-\epsilon \rfloor$, $v=\lfloor t-\epsilon \rfloor$. Then $u+v\ge s+t-2\ge 4m-2$. Consider again the polygon $P$ given by the vertices  $(0,0)$, $(u,0)$, $(m,m)$ and $(0,v)$. As before by Pick's theorem we have
\[\#(P\cap\bZ^2)=i+b=A+\frac{1}{2}b+1\ge \frac{1}{2}(m+1)(u+v)+2 \ge (m+1)(2m-1)+2=m(2m+1)+1.
\]
Since every lattice point of $P$ (except possibly $(m,m)$) is contained in $Q$, we obtain $\#(Q\cap\bZ^2)\ge m(2m+1)$ as desired.
\end{proof}

We are now ready to prove the following.

\begin{prop} \label{prop:X_2,3}
The smooth complete intersection $X=X_{2,3}\subseteq \bP^5$ of a quadric and a cubic is K-stable.
\end{prop}

\begin{proof}
Let $D\sim_\bQ -K_X$ be an effective divisor on $X$ and let $M\sim_\bQ -K_X$ be a movable boundary. By Theorem \ref{thm:criterion}, it suffices to show that $(X,\frac{1}{4}D+\frac{1}{2}M)$ is klt (note that $n=3$). Since being klt is preserved under convex linear combination and $\rho(X)=1$, we may assume that $D$ is irreducible. As $\Pic(X)$ is generated by $-K_X$, we have $D=\frac{1}{r}D_0$ where $D_0$ is integral and $D_0\in |-rK_X|$ for some $r\in\bZ$. Let $H$ be the hyperplane class on $X$ and let  $\Delta=\frac{1}{4}D+\frac{1}{2}M$. Depending on the value of $r$, we separate into three cases.

(1) First suppose that $r\ge 3$. Then for $0<\epsilon\ll 1$, $(X,12(1-\epsilon)\Delta)$ is klt outside a subset of dimension at most $1$ since every component of $12\Delta$ has coefficient at most $1$. Let $x\in X$ and let $S\subseteq X$ be a general hyperplane section containing $x$. Let $\Delta_S=12\Delta|_S$. Then $(S,(1-\epsilon)\Delta_S)$ is klt outside a finite number of points. By adjunction $S$ is a smooth K3 surface and $K_S+\Delta_S\sim_\bQ 9H|_S$. We claim that $\lct(S,\Delta_S)>\frac{1}{12}$. Indeed, by Corollary \ref{cor:lct>1/(1+lambda)}, it suffices to show that $h^0(S,\cO_S(9H))<\bar{\sigma}_{2,11}$. But by Riemann-Roch, we have $h^0(S,\cO_S(9H))=\frac{9^2}{2}(H|_S^2)+2=245$ while by Lemma \ref{lem:sigma-bar_2,m} with $m=11$ we have $\bar{\sigma}_{2,11}
\ge 11\cdot 23=253$, proving the claim. It follows that $(S,\frac{1}{12}\Delta_S)=(S,\Delta|_S)$ is klt, hence by inversion of adjunction, $(X,\Delta)$ is also klt at $x$. Since $x\in X$ is arbitrary, we see that $(X,\Delta)$ is klt in this case.

(2) Next suppose that $r=2$. Let $\Gamma=\frac{4}{3}\Delta=\frac{1}{3}D+\frac{2}{3}M$.

\begin{claim*}
The pair $(X,\Gamma)$ is log canonical in dimension $1$.
\end{claim*}

\begin{proof}[Proof of Claim]
Suppose not, and let $C$ be a curve in the non-lc locus of $(X,\Gamma)$. We first show that $C$ is a line. Otherwise if $S$ is a general hyperplane section then by adjunction $(S,\Gamma|_S)$ is not log canonical at at least $2$ points (those in $C\cap S$), say, $x_1$ and $x_2$. Let $\Delta_S=6\Gamma|_S$. As before $(S,(1-\epsilon)\Delta_S)$ is klt outside a finite set of points for $0<\epsilon\ll 1$ and we have $K_S+\Delta_S\sim_\bQ 6H|_S$. As $(S,\Gamma|_S)$ is not lc at $x_i$ $(i=1,2)$, we have $\lct(S;\Delta_S)<\frac{1}{6}$ in the neighbourhood of $x_i$, thus by Theorem \ref{thm:lct-estimate} and Remark \ref{rem:pointwise-estimate} we have $\lct(S;\Sigma_i)<\frac{1}{5}$ where $\Sigma_i$ is a $0$-dimensional subscheme supported at $x_i$ and $\ell(\cO_{\Sigma_1})+\ell(\cO_{\Sigma_2})\le h^0(S,\cO_S(6H))=\frac{6^2}{2}(H|_S^2)+2=110$. But by Lemma \ref{lem:sigma_n and lct} and \ref{lem:sigma_2,m} with $m=5$ we also have $\ell(\cO_{\Sigma_i})\ge \sigma_{2,5} \ge 59$ $(i=1,2)$, a contradiction. Hence $\deg C\le 1$ and $C$ is a line. 

We next prove that $\mult_C D\le 1$, or equivalently, $s:=\mult_C D_0\le 2$. To see this, take a general hyperplane section $S$ containing the line $C$. By dimension count it is not hard to see that $S$ is smooth (by Bertini's theorem, the singular locus of $S$ is contained in $C$, thus if $S$ is singular, it is the tangent hyperplane section of some $x\in C$; but these tangent hyperplanes only vary in a 2-dimensional family, whereas there is a 3-dimensional family of hyperplanes containing $C$). We have $D_0|_S=sC+Z$ where $Z$ is integral. As $S$ is a  K3 surface and $C\cong\bP^1$, we have $(H|_S^2)=6$, $(H\cdot C)=1$, $(C^2)=-2$ and hence $(Z^2)=(2H|_S-sC)^2=24-4s-2s^2$. On the other hand, since $Z$ is an integral curve on a K3 surface we have $(Z^2)\ge -2$. Thus $24-4s-2s^2\ge -2$ and it follows that $s\le 2$ as $s\in\bZ$.

Now if $(X,\Gamma)$ is not lc along $C$, then by \cite[Theorem 2.2]{dFEM-mult-and-lct} we have 
\[
\mult_C(M^2)>9-3\mult_C D \ge 6.
\]
But $\mult_C(M^2)\le \deg(M^2)=(M^2\cdot H)=6$, a contradiction. This proves the claim.
\end{proof}

It follows from the claim that $(X,(1-\epsilon)\Gamma)$ is klt outside a finite set of points. Note that $K_X+\Gamma\sim_\bQ 0$. We may apply Theorem \ref{thm:lct-estimate} with $L=0$ and $\lambda=n$ (as in the proof of Corollary \ref{cor:hypersurface lct}) to conclude that $\lct(X;\Gamma)\ge\frac{3}{4}$, with equality if and only if $\mult_x \Gamma=4$ for some $x\in X$. But it is easy to see that $\mult_x D\le (D\cdot H^2)=6$ and $(\mult_x M)^2\le (M^2\cdot H)=6$, thus $\mult_x \Gamma=\frac{1}{3}\mult_x D + \frac{2}{3}\mult_x M\le 2+\frac{2}{3}\sqrt{6}<4$. Therefore the equality of $\lct(X;\Gamma)\ge\frac{3}{4}$ in never achieved and $(X,\Delta)=(X,\frac{3}{4}\Gamma)$ is klt as desired.

(3) We are left with the case in which $r=1$, in other words, $D$ is a hyperplane section. Let $\Gamma=\frac{4}{3}\Delta$ be as in the previous case. Again we claim the following.

\begin{claim*}
The pair $(X,\Gamma)$ is log canonical in dimension $1$.
\end{claim*}

\begin{proof}[Proof of Claim]
The proof is very similar to the previous case, so we only give a sketch. Let $C$ be a curve in the non-lc locus of $(X,\Gamma)$. We have $\deg C\le 2$ (otherwise if $S$ is a general hyperplane section and $\Delta_S=3\Gamma|_S$, then lc centers of $(S,(1-\epsilon)\Delta_S)$ are isolated points and $\lct(S;\Delta_S)<\frac{1}{3}$ in the neighbourhood of at least $3$ points. Hence there exists a $0$-dimensional subscheme $\Sigma$ supported on one of these points such that $\ell(\cO_\Sigma)\le\lfloor \frac{1}{3} h^0(S,\cO_S(3H)) \rfloor =9$ and $\lct(S;\Sigma)<\frac{1}{2}$, but the latter inequality implies that $\ell(\cO_\Sigma)\ge \sigma_{2,2}\ge 13$ by Lemma \ref{lem:sigma_n and lct} and \ref{lem:sigma_2,m} with $m=2$, a contradiction). If $C$ is a line, we simply argue as in the previous case (i.e. take a general hyperplane section containing $C$ to prove $\mult_C D\le 1$ and then apply \cite[Theorem 2.2]{dFEM-mult-and-lct} to get a contradiction). So we assume that $C$ is a conic. We claim that $\mult_C D\le 2$. Suppose not. Then as $D$ is an integral divisor (recall that $r=1$), we have $\mult_C D\ge 3$. Let $S$ be a general hyperplane section containing $C$. Then $S$ is smooth along $C$: otherwise, there exists $x\in C$ such that $\mult_x S\ge 2$; as $\mult_x D\ge 3$, $D\cap S$ is a curve with degree $6$ and multiplicity at least $6$ at $x$, hence it is a union of $6$ lines; but $D\cap S$ already contains the conic, a contradiction. As $6\ge\deg(D\cdot S)\ge \mult_C D \cdot \deg C\ge 6$, we must have $D|_S=3C$. Since $D$ is a hyperplane section we have $(C^2)>0$ on $S$; but as $C\cong\bP^1$ is in the smooth locus of the (possibly singular) surface $S$ and $S$ has trivial canonical line bundle, we have $(C^2)=-2$ by adjunction, a contradiction. Hence we always have $\mult_C D\le 2$. Now another application of \cite[Theorem 2.2]{dFEM-mult-and-lct} gives
\[
\mult_C(M^2)>9-3\mult_C D \ge 3
\]
and therefore $\deg(M^2)=(M^2\cdot H)\ge \mult_C(M^2)\cdot \deg C>6$, a contradiction. This proves the claim.
\end{proof}

We are in the same situation as in the previous case and the rest of the proof is identical to the one there. Hence in all cases $(X,\Delta)$ is klt and we conclude that $X$ is K-stable.
\end{proof}

Theorem \ref{thm:K-stability of cpi} now follows by combining Proposition \ref{prop:X_2,3}, Theorem \ref{thm:superrigidity implies K-stability}, Theorem \ref{thm:superrigidity} and Lemma \ref{lem:lct-cpi}.

\appendix

\section{Conditional birational superrigidity}

\centerline{Charlie Stibitz \footnote{CS would like to thank his advisor J\'anos Koll\'ar for  constant support and he also wishes to thank Fumiaki Suzuki for helpful conversation.}, Ziquan Zhuang}

\bigskip

In an attempt to study the birational geometry of Fano varieties, Suzuki proposed (in a paper that was later withdrawn) the following notion of conditional birational superrigidity:

\begin{defn}
Let $X$ be a Fano manifold of Picard number one and let $s\ge 2$ be an integer. Consider the following condition on $X$:
    \begin{enumerate}
        \item[$(C_{s})$] every birational map from $X$ to a Mori fiber space whose undefined locus has codimension at least $s$ is an isomorphism.
    \end{enumerate}
We say $X$ is \emph{conditionally birationally superrigid} if it satisfies condition $(C_{i_X+1})$ where $i_X$ is the index of $X$ (i.e. $-K_X=i_X H$ where $H$ is the ample generator of $\Pic(X)$).
\end{defn}

For example, when $X$ has index one, conditional birational superrigidity is just the usual birational superrigidity. On the other hand, if $X$ is a complete intersection of index $i_X\ge 2$, then $X$ does not satisfy condition $(C_{i_X})$ due to the existence of general linear projections $X\dashrightarrow \bP^{i_X-1}$.

In this appendix, we apply the main technique of this paper to give a short proof of the conditional birational superrigidity of Fano complete intersections in large dimension. Indeed, we prove something stronger.

\begin{thm} \label{thm:(X,M) canonical}
Let $m,r\in\bZ_+$. Then there exists an integer $N=N(r,m)$ depending only on $m$ and $r$ such that for every smooth Fano complete intersection of codimension $r$ and dimension $n\ge N$ in $\bP^{n+r}$ and every movable boundary $M\sim_\bQ mH$ whose base locus has codimension at least $m+1$ $($where $H$ is the hyperplane class$)$, the pair $(X,M)$ is canonical.
\end{thm}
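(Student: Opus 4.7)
The plan is to mirror the proof of Theorem \ref{thm:superrigidity}, with the base-locus codimension hypothesis on $M$ replacing the index-one assumption at every step where the latter was used. Throughout, $H$ denotes the hyperplane class.

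The first task will be to extend \cite[Proposition 2.1]{Suzuki-cpi} to the present setting. The hypothesis $\mathrm{codim}_X \mathrm{Bs}|M| \geq m + 1$ guarantees that $M^2$ (and more generally $M^k$ for $k \leq m+1$) is a well-defined effective cycle of pure codimension $k$. Iterating Suzuki's Chern-class argument on successive linear sections, one expects constants $\nu = \nu(r,m)$ and a subvariety $Z \subseteq X$ with $\dim Z \leq \nu$ such that for every $x \in X \setminus Z$ and a general two-dimensional linear section $S$ through $x$ one has $\mult_x(M^2|_S) \leq 1$, and moreover $\mult_S M \leq 1$ for every subvariety $S \subseteq X$ with $\dim S \geq \nu$. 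Combining this with \cite[Theorem 0.1]{dFEM-mult-and-lct} applied to $S$ yields that $(S, 2M|_S)$ is log canonical at $x$; by inversion of adjunction $(X, 2M)$ is log canonical at $x$. Arguing as in the first paragraph of the proof of Theorem \ref{thm:superrigidity}, this together with the bound $\mult_S M \leq 1$ shows that $(X, M)$ is canonical outside some subvariety $W \subseteq X$ with $\dim W \leq \nu' = \nu'(r, m)$.

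Next, suppose for contradiction that $(X, M)$ fails to be canonical at some $x \in W$. Let $Y$ be the intersection of $X$ with a general linear subspace of codimension $\nu' + 1$ through $x$, so that $Y \subseteq \bP^{n + r - \nu' - 1}$ is a smooth complete intersection of codimension $r$ and dimension $n - \nu' - 1$. By Lemma \ref{lem:non-canonical to non-lc}, the pair $(Y, M|_Y)$ is not log canonical at $x$, while by genericity of $Y$ together with the preceding paragraph, $(Y, 2(1 - \epsilon) M|_Y)$ is klt outside a finite set for $0 < \epsilon \ll 1$. Set $D := 2 M|_Y \sim_\bQ 2m H|_Y$; since $X$ is Fano, $K_Y \sim \beta H|_Y$ with $\beta \leq \nu'$, and $L := (\nu' + 2m + 1) H|_Y$ makes $L - (K_Y + (1-\epsilon) D)$ ample. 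Corollary \ref{cor:lct>1/2} then yields $\lct(Y; D) > 1/2$ as soon as
\begin{equation*}
    h^0(Y, L) \leq \frac{(\dim Y)^{\dim Y}}{(\dim Y)!}.
\end{equation*}
The left-hand side is bounded by $\binom{n + r + 2m}{\nu' + 2m + 1}$, a polynomial in $n$ whose degree depends only on $r, m$, while the right-hand side grows super-exponentially in $n$; the inequality therefore holds for $n \geq N(r, m)$ for some $N(r,m)$ depending only on $r$ and $m$. But $\lct(Y; 2M|_Y) > 1/2$ is equivalent to $(Y, M|_Y)$ being klt at $x$, a contradiction.

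The principal difficulty will be the first step: obtaining the multiplicity bound $\mult_x(M^2|_S) \leq 1$ with thresholds $\nu, \nu'$ depending only on $r, m$ and not on the individual degrees $d_1, \ldots, d_r$ of the defining equations. In the index-one case of Theorem \ref{thm:superrigidity} the analogous bound follows from Suzuki's Chern-class computation using the identity $\sum d_i = n + r$, whereas here that identity is replaced by the inequality $\sum d_i \leq n + r$ together with the base-locus codimension hypothesis; the challenge is to carry out the iterated slicing so that the dimension thresholds and multiplicity bounds depend only on $r, m$. Once this is accomplished the rest is a direct adaptation of the proof of Theorem \ref{thm:superrigidity}, with the polynomial-versus-super-exponential comparison in the last step satisfied routinely in high dimension.
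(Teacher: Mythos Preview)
Your proposal has a genuine gap in the first step, and it propagates to the second. Suzuki's multiplicity bound \cite[Proposition 2.1]{Suzuki-cpi} gives $\mult_S(M^k)\le m^k$ (not $\le 1$) for subvarieties $S$ of dimension at least $kr$, because $M\sim_\bQ mH$. The bound $\le 1$ in the proof of Theorem \ref{thm:superrigidity} came precisely from the index-one hypothesis $M\sim_\bQ H$; here it is simply false for $m\ge 2$. With the correct bound $\mult_x(M^2|_S)\le m^2$ on a surface section $S$, \cite[Theorem 0.1]{dFEM-mult-and-lct} only yields that $(S,\frac{2}{m}M|_S)$ is lc, so you cannot conclude that $(X,2M)$ is lc outside a small set, and the appeal to Corollary \ref{cor:lct>1/2} (i.e.\ $\lambda=1$) in the second step collapses. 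Your worry about the thresholds $\nu,\nu'$ depending on the $d_i$ is, incidentally, a non-issue: Suzuki's threshold is $kr$, which depends only on $k$ and $r$.

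The fix is to let $m$ dictate both the dimension of the test section and the value of $\lambda$. For the ``canonical outside a small set'' step, cut to a general $m$-dimensional section $V$ through $x$ (not a surface): then $\mult_x(M|_V^m)\le m^m$ and \cite[Theorem 0.1]{dFEM-mult-and-lct} gives $(V,M|_V)$ lc at $x$. The base-locus hypothesis $\mathrm{codim}\ge m+1$ is exactly what allows Lemma \ref{lem:non-canonical to non-lc} to be invoked with a codimension-$(n-m)$ section, so this shows $(X,M)$ is canonical outside a set of dimension $\le mr-1$. For the second step, the right klt-in-codimension statement is that $(X,\frac{m+1}{m}M)$ is lc outside a set of dimension $\le (m+1)r-1$ (via $\mult_S(M^{m+1})\le m^{m+1}$ on $(m+1)$-dimensional sections). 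After restricting to a general linear section $Y$ of that codimension, apply Corollary \ref{cor:lct>1/(1+lambda)} with $\lambda=\frac{1}{m}$ rather than $\lambda=1$; the required inequality is $h^0(Y,L)<\sigma_{\dim Y,\,1/m}$, and the latter grows like $c^n$ for some $c=c(m)>1$ by Lemma \ref{lem:sigma_n when n>>0}, so the polynomial-versus-exponential comparison still goes through.
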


\begin{cor} \label{cor:conditional-superrigid}
Let $r,s\in\bZ_+$. Then there exists an integer $N=N(r,s)$ depending only on $r$ and $s$ such that every smooth Fano complete intersection of index $s$ and codimension $r$ in $\bP^{n+r}$ is conditionally birationally superrigid if $n\ge N$.
\end{cor}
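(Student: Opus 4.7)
The plan is to reduce the corollary to Theorem \ref{thm:(X,M) canonical} via the Noether--Fano inequality. Set $N(r,s) := \max\{N_0(r,s),\, r+3\}$, where $N_0(r,s)$ is the constant produced by Theorem \ref{thm:(X,M) canonical} with $m=s$; the lower bound $n \ge r+3$ ensures, via Lefschetz, that a smooth complete intersection $X \subset \bP^{n+r}$ of codimension $r$ has Picard number one, so that the Noether--Fano framework of \cite[Theorem 1.26]{Noether-Fano} applies (and $X$ is automatically smooth, hence $\bQ$-factorial with terminal singularities).

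Now fix $X$ of index $s$ and dimension $n \ge N(r,s)$, and let $f \colon X \dashrightarrow Y$ be a birational map to a Mori fibre space $\pi \colon Y \to T$ whose indeterminacy locus $B \subset X$ has codimension $\ge s+1$. Pick a very ample complete linear system $\cM_Y$ on $Y$ whose associated morphism $\phi \colon Y \hookrightarrow \bP^N$ is a closed embedding (e.g.\ a sufficiently positive multiple of $-K_{Y/T} + \pi^*A$ for $A$ very ample on $T$), and let $\cM := f_*^{-1}\cM_Y$ be its strict transform on $X$. Since $\phi$ is a closed immersion, the rational map associated to $\cM$ equals the composition $\phi \circ f$, so the base locus of $\cM$ on $X$ coincides with the indeterminacy locus $B$ of $f$. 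Writing $\cM \sim_\bQ \mu(-K_X) = \mu s H$ for the unique $\mu \in \bQ_{>0}$, set $M := \tfrac{1}{\mu}\cM$: this is a movable boundary with $M \sim_\bQ sH$ whose base locus equals $B$, hence has codimension $\ge s+1$.

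Theorem \ref{thm:(X,M) canonical} applied with $m = s$ now yields that $(X, M)$ is canonical. On the other hand, if $f$ were not an isomorphism, the Noether--Fano inequality would force $(X, M)$ to fail the canonical condition, contradicting the previous sentence. Therefore $f$ must be an isomorphism, proving that $X$ satisfies condition $(C_{i_X+1}) = (C_{s+1})$, i.e.\ $X$ is conditionally birationally superrigid. The only non-routine step is the appeal to Theorem \ref{thm:(X,M) canonical}; the identification of the base locus of $\cM$ with the indeterminacy of $f$ is standard once $\cM_Y$ is chosen to embed $Y$, and the Noether--Fano step is classical.
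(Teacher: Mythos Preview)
Your proof is correct and follows essentially the same route as the paper: reduce to Theorem~\ref{thm:(X,M) canonical} by producing, via the method of maximal singularities (Noether--Fano), a movable boundary $M\sim_\bQ sH$ whose base locus lies in the indeterminacy locus of $f$ and for which $(X,M)$ would be non-canonical if $f$ were not an isomorphism. The paper simply cites \cite[\S 2]{P-book} for this step, whereas you spell out the construction of $\cM$ as the strict transform of a very ample complete linear system embedding $Y$; your added Lefschetz bound is harmless (and in fact $n\ge 3$ already suffices for $\rho(X)=1$).
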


\begin{rem}
It is also conjectured that for every birational map $\phi:X\dashrightarrow X'$ from a Fano hypersurface of index $s$ to a Mori fiber space $f:X'\rightarrow S$ that is not an isomorphism, we have $\dim S\le s-1$, see e.g. \cite[Conjecture 1.1]{P-index-two}.
\end{rem}

As before, we need some estimate of $\sigma_{n,\lambda}$ for the proof of Theorem \ref{thm:(X,M) canonical}. This is given as follows:

\begin{lem} \label{lem:sigma_n when n>>0}
Fix $\lambda>0$. Then there exists a constant $c>1$ $($depending on $\lambda)$ such that $\sigma_{n,\lambda}> c^n$ for $n\gg 0$.
\end{lem}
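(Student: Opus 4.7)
The plan is to exhibit, for every admissible $\mathbf{a}\in\bR^n_+$ (i.e.\ one with $(\lambda,\ldots,\lambda)\in Q_\mathbf{a}$, equivalently $\sum_{i=1}^n a_i<1/\lambda$), at least $c^n$ lattice points inside $Q_\mathbf{a}$ for some $c=c(\lambda)>1$. The first observation is that the $\ell^1$ constraint $\sum a_i<1/\lambda$ forces most coordinates of $\mathbf{a}$ to be small: a Markov-type pigeonhole shows that the set $S=\{\,i:a_i<2/(\lambda n)\,\}$ satisfies $|S|>n/2$, since otherwise $\sum_i a_i\ge (n/2)\cdot 2/(\lambda n)=1/\lambda$, a contradiction.

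With $S$ in hand, the next step is to produce lattice points as $0/1$ indicator vectors $\mathbf{x}_I$ of subsets $I\subseteq S$. For $|I|=k$ one has $\mathbf{a}\cdot\mathbf{x}_I<2k/(\lambda n)$, which is $\le 1$ as soon as $k\le\lfloor\lambda n/2\rfloor$, so $\mathbf{x}_I\in Q_\mathbf{a}\cap\bZ^n$. Summing over all such $I$,
\[\#(Q_\mathbf{a}\cap\bZ^n)\;\ge\;\sum_{k=0}^{\min(|S|,\,\lfloor\lambda n/2\rfloor)}\binom{|S|}{k}.\]

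The last step is to bound this sum from below by $c^n$. If $\lambda\ge 1$, then already $\sum a_i<1$, so every vector in $\{0,1\}^n$ lies in $Q_\mathbf{a}$ and $\sigma_{n,\lambda}\ge 2^n$. If $0<\lambda<1$, I discard all but the single term $\binom{|S|}{\lfloor\lambda n/2\rfloor}\ge\binom{\lceil n/2\rceil}{\lfloor\lambda n/2\rfloor}$ (using monotonicity of $\binom{N}{k}$ in $N$), and invoke Stirling to get $\binom{\lceil n/2\rceil}{\lfloor\lambda n/2\rfloor}^{1/n}\to 2^{H(\lambda)/2}$, where $H$ denotes the binary entropy; since $H(\lambda)>0$ on $(0,1)$, any $1<c<2^{H(\lambda)/2}$ works for $n\gg 0$. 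There is no real obstacle here: once the pigeonhole step identifying the large set $S$ of small coordinates is in place, the rest is a routine subset-counting plus Stirling estimate.
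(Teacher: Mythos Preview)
Your argument is correct and follows the same strategy as the paper's: exploit the constraint $\sum_i a_i<1/\lambda$ to locate many ``small'' coordinates and then count $0/1$ lattice vectors supported on them. The execution differs, and the paper's is somewhat cleaner. Rather than a Markov--pigeonhole step, the paper sorts $a_1\le\cdots\le a_n$ and takes the first $m=\lfloor n\lambda\rfloor$ coordinates; then $a_1+\cdots+a_m\le\frac{m}{n}\sum_i a_i<\frac{m}{n\lambda}\le 1$, so \emph{every} vertex of $[0,1]^m\times\{(0,\ldots,0)\}$ lies in $Q_\mathbf{a}$, giving $\sigma_{n,\lambda}\ge 2^m>2^{n\lambda-1}$ directly and any $c<2^\lambda$ works. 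This avoids the partial sum of binomials and the Stirling/entropy estimate; your bound $c<2^{H(\lambda)/2}$ is comparable (in fact sharper for small $\lambda$), but obtained with more machinery.
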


\begin{proof}
We may assume that $\lambda<1$ since $\sigma_{n,\lambda}$ is non-decreasing in the variable $\lambda$. Let $\mathbf{a}\in \bR^n_+$ be such that $(\lambda,\cdots,\lambda)\in Q=Q_{\mathbf{a}}$. We may assume that $\mathbf{a}=(a_1,\cdots,a_n)$ where $a_1\le \cdots \le a_n$. We have $a_1+\cdots+a_n<\frac{1}{\lambda}$. Let $m=\lfloor n\lambda \rfloor$. Then $a_1+\cdots+a_m\le \frac{m}{n}(a_1+\cdots+a_n)<\frac{m}{n\lambda}\le 1$. It follows that every vertex of $[0,1]^m\times \{(0,\cdots,0)\}$ is contained in $Q$, hence $\#(Q\cap\bZ^n)\ge 2^m > 2^{n\lambda-1}$ and therefore the statement of the lemma holds for any $1<c<2^{\lambda}$.
\end{proof}

\begin{proof}[Proof of Theorem \ref{thm:(X,M) canonical}]

By \cite[Proposition 2.1]{Suzuki-cpi}, we have $\mult_S(M^m)\le m^m$ for every subvariety $S\subseteq X$ of dimension at least $mr$; in other words, there exists a subset $Z\subseteq X$ of dimension at most $mr-1$ such that $\mult_x(M^m)\le m^m$ for every $x\not\in Z$. We first claim that $(X,M)$ has canonical singularities outside $Z$. Suppose this is not the case and $(X,M)$ is not canonical at $x\not\in Z$. Let $V\subseteq X$ be a general complete intersection subvariety of dimension $m$ containing $x$. Then since $(X,M)$ is obviously canonical outside the base locus of $M$, which has codimension at least $m+1$, we see that $(V,M|_V)$ is not lc at $x$ by Lemma \ref{lem:non-canonical to non-lc}. But since $V$ is general we have $\mult_x(M|_V^m)=\mult_x(M^m)\le m^m$ (see e.g. \cite[Proposition 4.5]{dFEM-bounds-on-lct}) and since $M|_V^m$ is a $0$-dimensional complete intersection subscheme, its multiplicity is the same as the Hilbert-Samuel multiplicity of its defining ideal, so by \cite[Theorem 0.1]{dFEM-mult-and-lct}, $(V,M|_V)$ is lc at $x$, a contradiction. This proves the claim.

By \cite[Proposition 2.1]{Suzuki-cpi} again, we have $\mult_S(M^{m+1})\le m^{m+1}$ for every subvariety $S\subseteq X$ of dimension at least $(m+1)r$, hence by a similar application of \cite[Theorem 0.1]{dFEM-mult-and-lct} as before, the pair $(X,\frac{m+1}{m} M)$ is log canonical outside a subset of dimension at most $mr+r-1$. Let $x\in X$ be an arbitrary point and let $Y\subseteq X$ be a general linear space section of codimension $mr+r-1$ containing $x$. Then the pair $(Y,\frac{m+1}{m} M|_Y)$ is log canonical outside a finite set of points. Let $L=(mr+m+r-1)H$. Then since $X$ is Fano, $L-(K_Y+\frac{m+1}{m}M)$ is nef, hence by Corollary \ref{cor:lct>1/(1+lambda)} (with $\lambda=m^{-1}$) we see that $\lct(Y,\frac{m+1}{m} M|_Y)\ge \frac{m}{m+1}$ as long as
\begin{equation} \label{eq:h^0(L)-conditional}
    h^0(Y,L)\le h^0(\bP^{n-mr+1},\cO_{\bP^{n-mr+1}}(mr+m+r-1))=\binom{n+m+r}{mr+m+r-1}< \sigma_{n,m^{-1}}
\end{equation}
By Lemma \ref{lem:sigma_n when n>>0}, $\sigma_{n,m^{-1}}$ grows exponentially with $n$, hence \eqref{eq:h^0(L)-conditional} is always satisfied for $n\ge N$ where $N$ is an integer depending only on $m$ and $r$. It follows that $(Y,M|_Y)$ is log canonical when $n\ge N$. On the other hand, $(X,M)$ is canonical outside $Z$, which has codimension at least $n-mr+1>\dim Y$ in $X$, thus by Lemma \ref{lem:non-canonical to non-lc}, $(X,M)$ is also canonical at $x$. Since $x\in X$ is arbitrary, we are done.
\end{proof}

\begin{rem}
For any given $m$ and $r$, we can always find an explicit $N(r,m)$ using the inequality \eqref{eq:h^0(L)-conditional} and the estimate $\sigma_{n,m^{-1}}>2^{\frac{n}{m}-1}$ from the proof of Lemma \ref{lem:sigma_n when n>>0}. For example, we may take $N(1,2)=36$ and $N(1,4)=200$.
\end{rem}

\begin{proof}[Proof of Corollary \ref{cor:conditional-superrigid}]
Let $N$ be the number given by Theorem \ref{thm:(X,M) canonical} with $m=s$. Let $X\subseteq\bP^{n+r}$ be a smooth Fano complete intersection of index $s$, codimension $r$ and dimension $n$. Suppose that $\phi:X\dashrightarrow X'$ is a birational map from $X$ to a Mori fiber space $X'$ such that $\phi$ is not an isomorphism and the undefined locus of $\phi$ has codimension at least $s+1$. By the usual method of maximal singularities (see e.g. \cite[Section 2]{P-book}), we find a movable boundary $M\sim_\bQ -K_X = sH$ whose base locus is contained in the undefined locus of $\phi$ (in particular, the base locus has codimension at least $s+1$) such that the pair $(X,M)$ is not canonical. By Theorem \ref{thm:(X,M) canonical}, this is impossible if $n\ge N$.
\end{proof}

\bibliography{ref}

\end{document}